\numberwithin{equation}{section}
\DeclareSymbolFont{extraup}{U}{zavm}{m}{n}
\DeclareMathSymbol{\varheart}{\mathalpha}{extraup}{86}
\DeclareMathSymbol{\vardiamond}{\mathalpha}{extraup}{87}
\newcounter{dummy} \numberwithin{dummy}{section}
\newtheorem{theorem}[dummy]{Theorem}
\newtheorem{definition}[dummy]{Definition}
\newtheorem{lemma}[dummy]{Lemma}
\newtheorem{proposition}[dummy]{Proposition}
\newtheorem{Remark}[dummy]{Remark}
\newtheorem{assumption}[dummy]{Assumption}
\DeclareMathOperator{\prox}{prox}
\DeclareMathOperator{\proj}{proj}
\title{\Large A First-Order Algorithmic Framework for Wasserstein \\ \vspace{0.1\baselineskip} Distributionally Robust Logistic Regression}
\author{
	\textbf{Jiajin Li, Sen Huang, Anthony Man-Cho So}\\
	\vspace{-2ex}
    Department of Systems Engineering \& Engineering Management\\
	The Chinese University of Hong Kong\\
    Shatin, N. T., Hong Kong\\
	\texttt{\{jjli,hsen,manchoso\}@se.cuhk.edu.hk} \\
}
\date{\vspace{-5ex}}
\begin{document}
	\maketitle
	\begin{abstract}
		Wasserstein distance-based distributionally robust optimization (DRO) has received much attention lately due to its ability to provide a robustness interpretation of
		various learning models. Moreover, many of the DRO problems that arise in the learning context admits exact convex reformulations and hence can be tackled by off-the-shelf solvers. Nevertheless, the use of such solvers severely limits the applicability of DRO in large-scale learning problems, as they often rely on general purpose interior-point algorithms. On the other hand, there are very few works that attempt to develop fast iterative methods to solve these DRO problems, which typically possess complicated structures. In this paper, we take a first step towards resolving the above difficulty by developing a first-order algorithmic framework for tackling a class of Wasserstein distance-based distributionally robust logistic
		regression (DRLR) problem. Specifically, we propose a novel linearized proximal ADMM to solve the DRLR problem, whose objective is convex but consists of a smooth term plus two non-separable non-smooth terms. We prove that our
		method enjoys a sublinear convergence rate. Furthermore, we conduct three different experiments to show its superb performance on both synthetic and real-world datasets. In particular, our method can achieve the same accuracy up to 800+ times faster than the standard off-the-shelf solver.	
	\end{abstract}

%%%%%%%%%%%%%%%%%%%%%%%%%%%% introduction%%%%%%%%%%%%%%%%%%%%%%%%%%%
\vspace{-\baselineskip}
\section{Introduction}
\vspace{-0.5\baselineskip}

One of the basic principles for dealing with the overfitting phenomenon in statistical learning is regularization~\cite{vapnik2000nature}. 
Recently, there has been a flurry of works that cite to interpret regularization from a distributionally robust optimization (DRO) perspective; see, e.g.,~\cite{shafieezadeh2015distributionally,blanchet2016robust,gao2017wasserstein,shafieezadeh2017regularization,namkoong2017variance} and the references therein. The results in these works not only provide a probabilistic justification of existing regularization techniques but also offer a powerful alternative approach to tackle risk minimization problems. Indeed, it has been shown that the DRO formulations of various statistical learning problems admit polynomial-time solvable and exact convex reformulations~\cite{shafieezadeh2015distributionally,gao2017wasserstein,blanchet2018optimal,sinha2017certifying,shafieezadeh2017regularization}, which can be tackled by off-the-shelf solvers (e.g., YALMIP). Nevertheless, the use of such solvers severely limits the applicability of the DRO approach in large-scale learning problems, as they often rely on general-purpose interior-point algorithms. On the other hand, there are very few works that address the design of fast iterative methods for solving the convex reformulations of DRO problems. This is in part due to the complicated structures that are often possessed by such reformulations. In fact, it is only recently that researchers have proposed stochastic gradient descent (SGD) algorithms for DRO with $f$-divergence-based ambiguity sets~\cite{namkoong2016stochastic}. However, $f$-divergence measures can only compare distributions with the same support, while the Wasserstein distances do not have such a restriction.
% Moreover, SGD-type algorithms cannot be easily extended to the Wasserstein DRO setting.
On another front, the works~\cite{luo2017decomposition,lee2015distributionally} propose cutting-surface methods to deal with Wasserstein distance-based DRO problems. However, they tend to suffer a large computational burden.
%as they do not address how ambiguity sets defined by probability metrics can be dealt with efficiently.

In this paper, we take a first step towards bridging the above-mentioned gap by proposing a new first-order algorithmic framework for solving the class of Wasserstein distance-based distributionally robust logistic regression (DRLR) problems considered in~\cite{shafieezadeh2015distributionally}. The starting point of our investigation is the following reformulation result; see Theorem 1 and Remark 2 in~\cite{shafieezadeh2015distributionally}:
%\begin{small}
%\begin{equation}
%\label{drlp}
%\begin{aligned}
%\inf_{\beta} \sup_{\mathbb{Q} \in {B}_{\epsilon}(\hat{\mathbb{P}}_N)} \mathbb{E}_{(x,y)\sim\mathbb{Q}} [\ell_{\beta}(x,y)] \triangleq
%\,& \inf_{\beta,\lambda} %\underset{\beta,s,\lambda}{\text{min}}
%& & \lambda\epsilon + \frac{1}{N} \sum_{i=1}^N \left( \ell_{\beta}(\hat{x}_i,\hat{y}_i) + \max\{ \hat{y}_i\beta^T\hat{x}_i - \lambda\kappa, 0 \} \right) \\
%&\, \text{s.t.}
%%& & l_\beta(x_i,y_i) \leq s_i, \; i \in [N],\\
%%&&& l_\beta(x_i,-y_i)-\lambda\kappa\leq s_i\; i \in [N],\\
%&& \|\beta\|_* \leq \lambda.
%\end{aligned} 
%\end{equation}
%\end{small}
\begin{equation}
\label{drlp}
\begin{aligned}
\inf_{\beta} \sup_{\mathbb{Q} \in {B}_{\epsilon}(\hat{\mathbb{P}}_N)} \mathbb{E}_{(x,y)\sim\mathbb{Q}} [\ell_{\beta}(x,y)] \triangleq
\,& \inf_{\beta,\,\lambda}~ \lambda\epsilon + \frac{1}{N} \sum_{i=1}^N \left( \ell_{\beta}(\hat{x}_i,\hat{y}_i) + \max\{ \hat{y}_i\beta^T\hat{x}_i - \lambda\kappa, 0 \} \right) \\
\,&\,\,\text{s.t.}~\,\,\|\beta\|_* \leq \lambda.
\end{aligned} 
\end{equation}
Here, $x\in\mathbb{R}^n$ denotes a feature vector and $y\in\{-1,+1\}$ its associated label to be predicted; $\ell_\beta(x,y)=\log(1+\exp(-y\beta^Tx))$ is the log-loss associated with the feature-label pair $(x,y)$ and regression parameter $\beta\in\mathbb{R}^n$; $\{(\hat{x}_i,\hat{y}_i)\}_{i=1}^N$ are $N$ training samples drawn from an unknown underlying distribution $\mathbb{P}^*$ on the feature-label space $\Theta=\mathbb{R}^n\times\{-1,+1\}$; $\hat{\mathbb{P}}_N = \tfrac{1}{N} \sum_{i=1}^N \delta_{(\hat{x}_i,\hat{y}_i)}$ denotes the empirical distribution associated with the training samples $\{(\hat{x}_i,\hat{y}_i)\}_{i=1}^N$; ${B}_{\epsilon}(\hat{\mathbb{P}}_N) = \{ \mathbb{Q} \in \mathcal{P}(\Theta): W(\mathbb{Q},\hat{\mathbb{P}}_N) \le \epsilon \}$ is the ball in the space $\mathcal{P}(\Theta)$ of probability distributions on $\Theta$ that is centered at the empirical distribution $\hat{\mathbb{P}}_N$ and has radius $\epsilon$ with respect to the Wasserstein distance
\[ W(\mathbb{Q},\hat{\mathbb{P}}_N) = \inf_{\Pi\in\mathcal{P}(\Theta\times\Theta)} \left\{ \int_{\Theta\times\Theta} d(\xi,\xi')\Pi(\rm{d}\xi,\rm{d}\xi') : \Pi(\rm{d}\xi,\Theta)=\mathbb{Q}(\rm{d}\xi),\, \Pi(\Theta,\rm{d}\xi') = \hat{\mathbb{P}}_N(\rm{d}\xi') \right\}, \]
where $\xi=(x,y)\in\Theta$, $d(\xi,\xi')=\|x-x'\|+\tfrac{\kappa}{2}|y-y'|$ is the transport cost between two data points $\xi,\xi'\in\Theta$ induced by a generic norm $\|\cdot\|$ on $\mathbb{R}^n$ with $\|\cdot\|_*$ being its dual norm, and $\kappa>0$ is a parameter that represents the reliability of the label measurements (the larger the $\kappa$, the more reliable are the measurements; when $\kappa=\infty$, the measurements are error-free). The formulation on the left-hand side of~\eqref{drlp} is motivated by the desire to construct an ambiguity set around the empirical distribution $\hat{\mathbb{P}}_N$ that contains the true distribution $\mathbb{P}^*$, so that the resulting classifier has good out-of-sample performance. We refer the reader to~\cite{shafieezadeh2015distributionally} for a more detailed discussion.

A natural question that arises from~\eqref{drlp} is how to solve the convex optimization problem on the right-hand side (RHS) efficiently. When $\kappa=\infty$, the RHS of~\eqref{drlp} reduces to a classic regularized logistic regression problem~\cite[Remark 1]{shafieezadeh2015distributionally}. As such, a host of practically efficient first-order methods (such as proximal gradient-type methods or stochastic (variance-reduced) gradient methods) with provable convergence guarantees (see, e.g.,~\cite{SNW12,XZ14,zhou2017unified,blanchet2018optimal}) can be applied. However, the algorithmic aspects of the practically  more relevant case where $\kappa<\infty$ have not been well explored. Our proposed framework for tackling this case consists of two steps. First, by considering the optimality conditions of the RHS of~\eqref{drlp}, we can derive an upper bound $\lambda^U$ on the optimal $\lambda^*$. This suggests that we can first initialize $\lambda$ to a value in $[0,\lambda^U]$ and solve the resulting problem that involves only the variable $\beta$ (the $\beta$-subproblem), then apply golden-section search to update $\lambda$, and then repeat the whole process until we find the optimal solution to~\eqref{drlp}. Second, which is the core step of our framework, is to design a fast iterative method for solving the $\beta$-subproblem. By treating $\lambda$ as a constant, the RHS of~\eqref{drlp} is equivalent to
\begin{equation} \label{eq:pre-split}
\inf_{\|\beta\|_* \le \lambda} \frac{1}{N}\sum_{i=1}^N \left( h(\hat{y}_i\beta^T\hat{x}_i) + \max\{ \hat{y}_i\beta^T\hat{x}_i - \lambda\kappa, 0 \} \right)
\end{equation}
with $h(u)=\log(1+\exp(-u))$. Although~\eqref{eq:pre-split} has a relatively simple norm-ball constraint, its objective is non-smooth and non-separable. As such, most existing first-order methods (e.g., projected/proximal subgradient methods) are ill-suited for tackling it. To proceed, we apply the operator splitting technique to reformulate~\eqref{eq:pre-split} as
\begin{equation}
\label{admm-s}
\begin{aligned}
& \inf_{\beta,\,\mu}
&  & \frac{1}{N}\sum_{i=1}^N \left(h(\mu_i)+ \max\{\mu_i-\lambda\kappa,0\} \right) \\
& \,\,\text{s.t.}
& & Z\beta - \mu = 0, \, \|\beta\|_* \leq \lambda,
\end{aligned} 
\end{equation}
where $Z$ is the $N\times n$ matrix whose $i$-th row is $\hat{y}_i\hat{x}_i^T$, and propose a new linearized proximal alternating direction method of multipliers (LP-ADMM) to fully exploit the structure of~\eqref{admm-s}. In particular, our method differs substantially from the commonly used ADMM-variants in the literature \cite{yin2010analysis,lin2011linearized,deng2016global,li2016majorized,gao2017first,hong2017linear,xu2017accelerated} in the updates of the variables. For the $\beta$-update, we solve a norm-constrained quadratic optimization problem. Since such a problem can be rather ill-conditioned, we provide three different types of solvers to handle this task, namely, the accelerated projected gradient descent, coordinate minimization~\cite{hsieh2008dual}, and active set conjugate gradient methods~\cite{cheng2014accurate}. For the $\mu$-update, observing that the coupling matrix for $\mu$ in the linear equality constraint is the identity, the augmented Lagrangian function is already locally strongly convex in $\mu$. Hence, instead of using a quadratic approximation of $h(\cdot)$ as in the vanilla proximal ADMM, we use a first-order approximation without step size selection; i.e.,
\begin{small}
	\begin{equation*}
	\mu^{k+1} = \arg\min_{\mu} \left\{ \frac{1}{N}\sum_{i=1}^N \left( h'(\mu_i^k)\mu_i + \max\{\mu_i-\lambda\kappa,0 \} \right)- (w^k)^T(Z\beta^{k+1}-\mu) + \frac{\rho}{2} \|\mu-Z\beta^{k+1}\|_2^2 \right\},
	\end{equation*}
\end{small}
where $w\in\mathbb{R}^N$ is the dual variable associated with the linear equality constraint in~\eqref{admm-s} and $\rho>0$ is the penalty parameter in the augmented Lagrangian function. On the theoretical side, we prove that our proposed LP-ADMM enjoys an $\mathcal{O}(\frac{1}{K})$ convergence rate under standard assumptions. On the numerical side, we demonstrate via extensive experiments that our proposed method can be sped up substantially by adopting a geometrically increasing step size strategy. In particular, our method can achieve a hundred-fold speedup over the standard solver (which is the only other method that has been used so far to solve~\eqref{drlp}) on both synthetic and real-world datasets without the need to tune an optimal penalty parameter in every iteration. To the best of our knowledge, our work is the first to propose a first-order algorithmic framework for solving the Wasserstein distance-based DRLR problem~\eqref{drlp} for any $\kappa>0$. Moreover, the proposed framework is sufficiently general that it can potentially be applied to other DRO problems, which could be of independent interest.

%%%%%%%%%%%%%%%%%%%%%%%%%%%%%%%%%%%% preliminaries%%%%%%%%%%%%%%%%
\section{Preliminaries} \label{sec:prelim}
\vspace{-0.5\baselineskip}
%\subsection{Basic Setup of the $\beta$-subproblem}
%Let us begin with some basic definitions and concepts. \
Let us introduce some basic definitions and concepts. To allow for greater generality, consider the following problem:
\begin{equation}
\begin{aligned}
& \underset{x,y}{\text{minimize}}
& &  F(x,y)=f(y) + P(y) +g(x) \\
& \text{subject to}
& & Ax-y=0.\\
%& & & (x,y) \in \mathcal{S} \triangleq \mathcal{B}\times \mathbb{R}^m.
\end{aligned}
\label{GP}
\end{equation}
Here, $f:\mathbb{R}^N \rightarrow \mathbb{R}$ is a closed convex function 
that is continuously differentiable on ${\rm int} ({\rm dom}(f))$ with linear operator $A \in \mathbb{R}^{N\times n}$; $P : \mathbb{R}^N \rightarrow \mathbb{R} \cup \{+\infty\} $ is a closed proper convex function; $g(x)$ is the indicator function of a norm ball. It should be clear that problem~\eqref{GP} includes the $\beta$-subproblem \eqref{admm-s} as a special case. Indeed, the latter can be written as
\[
\begin{aligned}
& \underset{\mu,\beta}{\text{minimize}}
& &  F(\mu,\beta)=f(\mu) + P(\mu) +g(\beta) \\
& \text{subject to}
& & Z\beta-\mu=0,\\
%& & & (x,y) \in \mathcal{S} \triangleq \mathcal{B}\times \mathbb{R}^m.
\end{aligned}
\]
where $f(\mu) = \frac{1}{N}\sum_{i=1}^N \left\{\log(1+\exp(-\mu_i)) +\frac{1}{2}(\mu_i-\lambda\kappa)\right\}$, $P(\mu) = \frac{1}{2N}\sum_{i=1}^N |\mu_i-\lambda\kappa|$, and $g(\beta) = \mathbb{I}_{\{\|\beta\|_* \leq \lambda\}}$.
%Note that the data matrix $A$ in \eqref{admm-s} is equal to $Z$.
Now, the augmented Lagrangian function associated with~\eqref{GP} is given by
\begin{equation}
\label{alf}
\mathcal{L}_\rho(x,y;w)  = f(y) + P(y) + g(x) - w^T(Ax-y) +\frac{\rho}{2}\|Ax-y\|_2^2,
\end{equation}
where $w$ is the multiplier. We use $(\mathcal{X}^*,\mathcal{Y}^*)$ to denote the solution set of~\eqref{GP}. A point $(x^*,y^*)$ is optimal for~\eqref{GP} if there exists a $w^*$ such that the following KKT conditions are satisfied: 
\begin{equation}
\label{KKT}
\left\{
\begin{aligned}
&A^Tw^*  \in  \partial g(x^*),\\
&-w^* \in \nabla f(y^*)+\partial P(y^*),\\
&Ax^* -y^* =  0.
\end{aligned}
\right.
\end{equation}

\begin{assumption}	
	\label{assp:1}
	There exists a point $(x^*,y^*,w^*)$ satisfying the KKT conditions in~\eqref{KKT}. 
\end{assumption}
\begin{assumption}
	\label{assp:2}
	The gradient of the function $f$ is  Lipschitz continuous; i.e., there exists a constant $L_f>0$ such that 
	\[ \|\nabla f(x)-\nabla f(y) \| \leq L_f \|x-y\|, \forall x,y.\]
\end{assumption}
\begin{definition}[Bregman Divergence]
	Let $f:\Omega\rightarrow \mathbb{R}$ be a function that is a) strictly convex, b) continuously differentiable, and c) defined on a closed convex set~$\Omega$. The Bregman divergence with respect to $f$ is defined as 
	\[
	B_f(x,y) = f(x)-f(y)-\langle \nabla f(y) ,x-y \rangle. 
	\]
\end{definition}

%%%%%%%%%%%%%%%%%%%%% paradigm %%%%%%%%%%%%%%%%%%%%%%%%%%%%%
\vspace{-\baselineskip}
\section{First-Order Algorithmic Framework}
\vspace{-0.5\baselineskip}
In this section, we present our first-order algorithmic framework for solving the DRLR problem. For concreteness' sake, we take $\|\cdot\|$ in the transport cost to be the $\ell_1$-norm in this paper. However, it should be mentioned that our framework is general enough to handle other norms as well.
%The induced dual norm in~\eqref{drlp} is $\|.\|_\infty$ and thus the box constraint (i.e., $\|\beta\|_\infty \leq \lambda$) is involved. It is worth noting that our framework is general enough to directly extend to other norms.
\vspace{-2mm}

\pgfdeclarelayer{background}
\pgfdeclarelayer{foreground}
\pgfsetlayers{background,main,foreground}
\tikzstyle{main} = [fill=red!20,text width=17em, text centered,
minimum height=5em,rounded corners]
\tikzstyle{QBQC}=[fill=blue!20, text width=14em, 
text centered, minimum height=3em,rounded corners]
\tikzstyle{ADMM}=[fill=none, text width=17em, 
text centered, minimum height=3.5em,rounded corners]

\def\blockdist{2.3}
\def\edgedist{2.5}
\begin{figure}[H]
	\centering
	\begin{tikzpicture}
	\node (main) [main] {	
		\begin{minipage}{\textwidth}
		\begin{equation*}
		\begin{aligned}
		& \underset{\beta,\,{\color{red}{\bf{\lambda}}}}{\text{min}}
		&  & \begin{array}{rcl}
		{\color{red}{\bf{\lambda}}}\epsilon + \frac{1}{N}\sum\limits_{i=1}^N (h(\hat{y}_i\beta^T\hat{x}_i)+\\
		\max\{\hat{y}_i\beta^T\hat{x}_i-{\color{red}{\bf{\lambda}}}\kappa,0\})
		\end{array} \\
		& \,\,\text{s.t.}
		& & \,\,\,\|\beta\|_\infty \leq {\color{red}{\bf{\lambda}}}. 
		\end{aligned} 
		\end{equation*}
		\end{minipage}};
	\path (main.-90)+(0,-2.2) node (DRLR)[label=below:{DRLR Problem}][main] {
		\begin{minipage}{\textwidth}
		\begin{equation} \label{eq:A}
		\begin{aligned}
		& \underset{\beta,\,s,\,\lambda}{\text{min}}
		& & \lambda\epsilon + \frac{1}{N} \sum_{i=1}^N s_i \\
		& \,\,\,\text{s.t.}
		& & \ell_\beta(\hat{x}_i,\hat{y}_i) \leq s_i, \; i \in [N],\\
		&&& \ell_\beta(\hat{x}_i,-\hat{y}_i)-\lambda\kappa\leq s_i\; i \in [N],\\
		&&& \|\beta\|_\infty \leq \lambda.
		\end{aligned}   
		\tag{\small A}
		\end{equation}
		\end{minipage}};
	\path (main.180)+(-4,0) node (accel)[ADMM] {		
		\begin{minipage}{\textwidth}
		\begin{equation} \label{eq:B}
		\begin{aligned}
		& \underset{\beta,\,\mu}{\text{min}}
		&  & \frac{1}{N}\sum\limits_{i=1}^N \left(h(\mu_i)+ \max\{\mu_i-\lambda\kappa,0\}\right) \\
		& \,\,\text{s.t.}
		& & Z\beta - \mu = 0,\\
		&&& \|\beta\|_\infty \leq \lambda. 
		\end{aligned} 
		\tag{\small B}
		\end{equation}
		\end{minipage}};
	\path (main.-180)+(-4,-3) node (QPBC) [QBQC] {		
		\begin{minipage}{\textwidth}
		\begin{equation} \label{eq:C}
		\begin{aligned}
		& \underset{\beta}{\text{min}}
		&  & \left\| Z\beta-\mu^k-\frac{w^k}{\rho} \right\|_2^2 \\
		& \,\,\text{s.t.}
		& & 
		\,\|\beta\|_\infty \leq \lambda.   
		\end{aligned}
		\tag{\small C}
		\end{equation}
		\end{minipage}};
	\path (QPBC)+(0,-1.7) node (IMA) {$\beta$-subproblem (LP-ADMM)};
	\path (main.south west)+(0,-4.7) node (INS) {\textbf{First-Order Algorithmic Framework}};
	\path [draw, ->,thick, line width=1pt] (main) -- node [above] {Fix $\lambda$} 
	(accel.east |- main) ;
	\path [draw, ->,thick, line width=1pt] (accel) -- node[right] {$\beta$ Exact Update} (accel.south|- QPBC.north) ;
	\path [draw, <->,thick, line width=1pt] (main) -- node[right]{Exact Reformulation}(main.south|- DRLR.north) ;
	\begin{pgfonlayer}{background}
	\path (accel.west |- main.north)+(-0.5,0.5) node (a) {};
	\path (INS.south -| main.east)+(+0.2,-0) node (b) {};
	\path[fill=yellow!20,rounded corners, draw=black!50, dashed]
	(a) rectangle (b);
	\path (accel.north west)+(-0.2,+0) node (a) {};
	\path (IMA.south -| accel.east)+(0,-0.1) node (b) {};
	\path[fill=blue!10,rounded corners, draw=black!50, dashed]
	(a) rectangle (b);
	\end{pgfonlayer}
	\end{tikzpicture}
	\caption{First-order algorithmic framework for Wasserstein DRLR with $\ell_1$-induced transport cost}
	\label{fig:framework}
\end{figure}
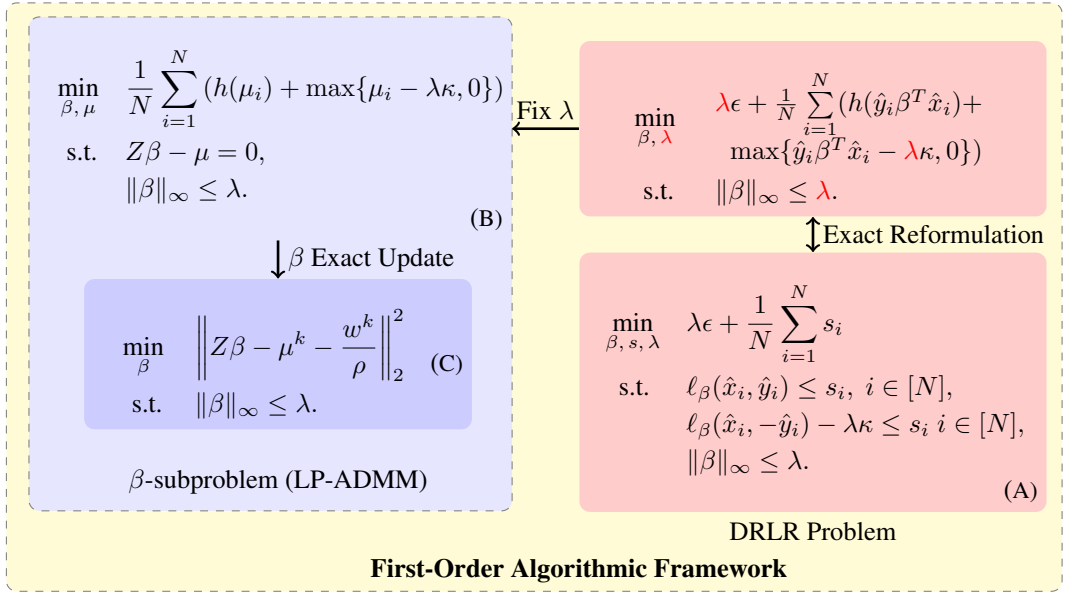
\vspace{-4.5mm}

We summarize the key components of our first-order algorithmic framework in Figure \ref{fig:framework}.
As shown in~\cite{shafieezadeh2015distributionally}, the original DRLR problem (i.e., LHS of~\eqref{drlp}) can be reformulated as the convex program~\eqref{eq:A} using strong duality. 
A standard approach to tackling problem~\eqref{eq:A} is to use an off-the-shelf solver (e.g., YALMIP). To develop an efficient algorithmic framework, we focus on the RHS of~\eqref{drlp} and proceed in two steps. Motivated by the structure of the RHS of~\eqref{drlp}, a natural first step is to fix $\lambda$ to a certain value to obtain the problem~\eqref{eq:pre-split}, which involves only the variable $\beta$ and will be referred to as the $\beta$-subproblem in the sequel.
The second, which is also the core step of our framework, is to design a fast iterative algorithm to tackle the $\beta$-subproblem~\eqref{eq:pre-split}. The main difficulty of problem~\eqref{eq:pre-split} comes from the two non-smooth non-separable terms.
To overcome this difficulty, we introduce the auxiliary variable $\mu_i =\hat{y}_i\beta^T\hat{x}_i $ to split the non-separable non-smooth term $\max\{\hat{y}_i\beta^T\hat{x}_i - \lambda\kappa, 0\}$, thus leading to problem~\eqref{eq:B}. Then, we propose a novel linearized proximal ADMM (LP-ADMM) algorithm to solve it efficiently. 
As will be shown in Section~\ref{sec:alg}, the proposed LP-ADMM will converge at the rate $\mathcal{O}(1/K)$ when applied to the $\beta$-subproblem~\eqref{eq:B}. In each iteration of our LP-ADMM algorithm, we perform an exact minimization for the $\beta$-update, which entails solving the box-constrained quadratic optimization problem~\eqref{eq:C} (here, $w^k$ denotes the corresponding Lagrange multiplier).
Towards that end, we provide three alternative solvers for problem~\eqref{eq:C}, which target three different settings. Specifically, we use accelerated projected gradient descent in the well-conditioned case; coordinate minimization~\cite{hsieh2008dual} in the high-dimensional case $N\ll d$; active set conjugate gradient method~\cite{cheng2014accurate} in the ill-conditioned case. The details are given in Appendix B. 

%We analyze the $\lambda$ optimiza process (i.e., the first step in our framework) at first.
To implement the above framework, let us first show that there is a finite upper bound $\lambda^U$ on the optimal $\lambda^*$ to problem~\eqref{eq:A}. Observe that the objective function in the RHS of~\eqref{drlp} takes the form
\[ \Omega(\lambda, \beta) = \lambda\epsilon + \frac{1}{N}\sum\limits_{i=1}^N \left(h(\hat{y}_i\beta^T\hat{x}_i)+ \max\{\hat{y}_i\beta^T\hat{x}_i-\lambda\kappa,0\}\right) + \mathbb{I}_{\{\|\beta\|_\infty \leq \lambda\}}.
\]
Now, let $q(\lambda) = \inf_{\beta} \Omega(\lambda,\beta)$. As the function $\Omega(\cdot,\cdot)$ is jointly convex, we can conclude that $q(\cdot)$ is a convex (and hence unimodal) function on $\mathbb{R}$. 
%\begin{lemma}
%Suppose that $\Omega: \mathcal{X}\times\mathcal{Y} \longrightarrow \mathbb{R} \cup \{\infty\}$ be a convex function which is proper and bounded below, then $h(x) = \inf\limits_{y\in\mathcal{Y}}\Omega(x,y)$ is also a convex function in $x \in \mathcal{X}$.
%\end{lemma}
Furthermore, the DRLR problem~\eqref{eq:A} satisfies the Mangasarian-Fromovitz constraint qualification (MFCQ), which implies that its KKT conditions are necessary and sufficient for optimality. As the following proposition shows, we can use the KKT system of problem~\eqref{eq:A} to derive the desired upper bound on $\lambda^*$:
\begin{proposition} \label{prop:kkt}
	Suppose that $(\beta^*,\lambda^*,s^*)$ is an optimal solution to problem~\eqref{eq:A} in Figure \ref{fig:framework}. Then, we have $\lambda^* \le \lambda^U = \tfrac{0.2785}{\epsilon}$.
	\label{prop:up} 
\end{proposition}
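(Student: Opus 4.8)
The plan is to first recognize that the constant $0.2785$ is nothing but the maximum value of the scalar function $u\mapsto u/(1+e^u)$ over $u\in\mathbb{R}$. With $h(u)=\log(1+e^{-u})$ as in the text, one has $h'(u)=-1/(1+e^u)$, so this quantity equals $\max_u\{-u\,h'(u)\}$; its maximizer solves $(u-1)e^u=1$, giving $u^\star\approx 1.2785$ and $\max_u u/(1+e^u)\approx 0.2785$. Since the whole argument will end by averaging this scalar bound over the $N$ samples, the task reduces to establishing $\epsilon\lambda^*\le \tfrac1N\sum_{i=1}^N u_i^*/(1+e^{u_i^*})$, where $u_i^*=\hat{y}_i\beta^{*T}\hat{x}_i$.

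To obtain this, I would write down the KKT system of problem~\eqref{eq:A}, which is valid because the MFCQ holds (as noted above). Introduce multipliers $\alpha_i\ge 0$ for $\ell_\beta(\hat{x}_i,\hat{y}_i)\le s_i$, $\gamma_i\ge 0$ for $\ell_\beta(\hat{x}_i,-\hat{y}_i)-\lambda\kappa\le s_i$, and $\eta\ge 0$ for $\|\beta\|_\infty\le\lambda$. Stationarity in $s_i$ gives $\alpha_i+\gamma_i=1/N$, while stationarity in $\lambda$ gives $\epsilon=\kappa\sum_i\gamma_i+\eta$. Using $\ell_\beta(\hat{x}_i,\hat{y}_i)=h(u_i)$ and $\ell_\beta(\hat{x}_i,-\hat{y}_i)=h(-u_i)$, stationarity in $\beta$ reads $0\in\sum_i\alpha_i h'(u_i^*)\hat{y}_i\hat{x}_i-\sum_i\gamma_i h'(-u_i^*)\hat{y}_i\hat{x}_i+\eta\,\partial\|\beta^*\|_\infty$. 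Taking the inner product of this inclusion with $\beta^*$ and using $\langle \hat{y}_i\hat{x}_i,\beta^*\rangle=u_i^*$ together with the norm identity $\langle v,\beta^*\rangle=\|\beta^*\|_\infty$ for any $v\in\partial\|\beta^*\|_\infty$, I would arrive at
\[ \eta\|\beta^*\|_\infty=\sum_i\alpha_i\frac{u_i^*}{1+e^{u_i^*}}-\sum_i\gamma_i\frac{u_i^*}{1+e^{-u_i^*}}. \]

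The final step combines these relations. By complementary slackness, $\eta\|\beta^*\|_\infty=\eta\lambda^*$ (either $\eta=0$, or the norm constraint is tight and $\|\beta^*\|_\infty=\lambda^*$), and $\gamma_i>0$ forces the second loss constraint to bind, hence $u_i^*\ge\lambda^*\kappa$. Substituting $\epsilon=\kappa\sum_i\gamma_i+\eta$ then yields
\[ \epsilon\lambda^*=\sum_i\alpha_i\frac{u_i^*}{1+e^{u_i^*}}+\sum_i\gamma_i\Big(\kappa\lambda^*-\frac{u_i^*}{1+e^{-u_i^*}}\Big). \]
On the support of $\gamma$ we have $\kappa\lambda^*\le u_i^*$, and since $1-\tfrac{1}{1+e^{-u}}=\tfrac{1}{1+e^{u}}$, the parenthesized term is at most $u_i^*/(1+e^{u_i^*})$. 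Thus every summand is bounded by $(\alpha_i+\gamma_i)\,u_i^*/(1+e^{u_i^*})$, and using $\alpha_i+\gamma_i=1/N$ we get $\epsilon\lambda^*\le \tfrac1N\sum_i u_i^*/(1+e^{u_i^*})\le 0.2785$, which is the claim. I expect the main obstacle to be the bookkeeping in the $\beta$-inner-product step—getting the signs of $h'(u_i^*)$ and the subgradient term right—and the clean use of the complementary-slackness fact $u_i^*\ge\lambda^*\kappa$ to recast the $\gamma$-terms into the same scalar form as the $\alpha$-terms; the evaluation of the universal constant $\max_u u/(1+e^u)=0.2785$ is then routine.
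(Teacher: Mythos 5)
Your proof is correct and follows essentially the same route as the paper's: stationarity in $s$ gives $\alpha_i+\gamma_i=1/N$, stationarity in $\lambda$ ties $\epsilon$ to the multipliers, taking the inner product of the $\beta$-stationarity condition with $\beta^*$ and invoking complementary slackness on both the norm constraint and the second loss constraint yields $\epsilon\lambda^*\le\tfrac{1}{N}\sum_i u_i^*/(1+e^{u_i^*})$, and the universal bound $\max_u u/(1+e^u)\approx 0.2785$ finishes it. The only cosmetic difference is that you keep a single multiplier $\eta$ with $\partial\|\cdot\|_\infty$ (which incidentally makes the argument norm-agnostic), whereas the paper expands $\|\beta\|_\infty\le\lambda$ into $2n$ linear constraints with multipliers $a_{i3},a_{i4}$.
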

\vspace{-3mm}
\begin{proof}
	Using $a_{ij}$, where $i=1,\ldots,N$ and $j=1,\ldots,4$ to denote the multipliers associated with the constraints in problem~\eqref{eq:A}, we can write down the KKT conditions of problem~\eqref{eq:A} as follows:
	\begin{equation*}
	\begin{aligned}
	&\underset{\beta,\,s,\,\lambda}{\text{min}}~\lambda\epsilon + \frac{1}{N} \sum_{i=1}^N s_i   \\
	&\,\,\,\text{s.t.} \,\,\,\, \ell_\beta(\hat{x}_i,\hat{y}_i) \leq s_i, \\
	&\qquad\qquad i \in [N], \\
	&\quad \quad\,\, \ell_\beta(\hat{x}_i,-\hat{y}_i)-\lambda\kappa\leq s_i ,\\
	& \qquad \qquad i \in [N], \\
	&\quad \quad\,\, {\color{blue}{e_i^T\beta \leq \lambda, ~ i \in [N]}}, \\
	& \quad \quad\,\,  {\color{blue}{-e_i^T\beta \leq \lambda, ~ i \in [N]}}
	\end{aligned} \Rightarrow  \left\{ 
	\begin{aligned}
	& \sum_{i=1}^N a_{i1} \nabla_\beta\ell_\beta(\hat{x}_i,\hat{y}_i) +a_{i2} \nabla_\beta\ell_\beta(\hat{x}_i,-\hat{y}_i)+(a_{i3}-a_{i4})e_i = 0,\\
	& \sum_{i=1}^N \kappa a_{i2} +a_{i3}+a_{i4}= \epsilon,\\
	& a_{i1}+a_{i2} = \frac{1}{N}, ~ i \in [N], \\
	& a_{i1}(\ell_\beta(\hat{x}_i,\hat{y}_i) - s_i) = 0, ~ i \in [N], \\
	& a_{i2}(\ell_\beta(\hat{x}_i,-\hat{y}_i) -\lambda\kappa- s_i) = 0, ~ i \in [N], \\
	& a_{i3}(e_i^T\beta-\lambda) = 0, ~ i \in [N], \\
	& a_{i4}(e_i^T\beta+\lambda) = 0, ~ i \in [N], \\
	& a_{ij} \ge 0, ~i \in [N], \, j\in[4].
	\end{aligned}
	\right.   
	\end{equation*}
	After some elementary manipulations (see Appendix~A for details), we obtain
	\[     \lambda \leq  \frac{1}{N\epsilon}\sum\limits_{i=1}^N \frac{\hat{y}_i\beta^T\hat{x}_i \exp(-\hat{y}_i\beta^T\hat{x}_i)}{1+\exp(-\hat{y}_i\beta^T\hat{x}_i)} \leq \frac{0.2785}{\epsilon}, \]
	as desired.
\end{proof}
\begin{Remark}
	Although Proposition~\ref{prop:kkt} applies to the case where the transport cost is induced by the $\ell_1$-norm, the techniques used to prove it can also carry over to the $\ell_2$ and $\ell_\infty$ cases. All we need to do is to modify the parts highlighted in blue above. Indeed, when the transport cost is induced by the $\ell_2$-norm, the norm constraint in problem~\eqref{eq:A} becomes $\|\beta\|_2\le\lambda$, which is equivalent to $\|\beta\|_2^2 \le \lambda^2$. On the other hand, when the transport cost is induced by the $\ell_\infty$-norm, the norm constraint becomes $\|\beta\|_1\le\lambda$, which can be expressed as $B\beta\leq \lambda e_{2^n}$ with $B$ being the $2^n \times n$ matrix whose rows are all the possible arrangements of $+1$'s and $-1'$s.
\end{Remark}	
\vspace{-2mm} 

Proposition~\ref{prop:kkt}, together with the unimodality of $q(\cdot)$, suggests the following natural strategy for finding an optimal solution $(\beta^*,\lambda^*,s^*)$ to problem~\eqref{eq:A}: initialize $\lambda$ in~\eqref{eq:A} to a value in $[0,\lambda^U]$, solve the resulting $\beta$-subproblem~\eqref{eq:B}, apply golden-section search to update $\lambda$, and repeat.
%Since $h(\lambda)$ is a unimodal function among the interval $[0,\lambda^U]$, we can invoke the Golden Section Search method to find the optimal solution $\lambda^*$ with the fast convergence rate $\mathcal{O}(\log\frac{1}{\delta})$, assuming that there exists an efficient algorithm to address the $\beta$-subproblem (B).
The pseudo-code for the golden-section search on $\lambda$ can be found in Appendix B. The $\beta$-subproblem~\eqref{eq:B} will be solved by our proposed LP-ADMM, which we present next.

%%%%%%%%%%%%%%%%%%%%%%%%%%%convergence analysis %%%%%%%%%%%%%%%%
\vspace{-2.6mm}
\section{LP-ADMM for the $\beta$-Subproblem and Its Convergence Analysis} \label{sec:alg}
%In the previous section, we have already established the first-order algorithmic framework. As mentioned in the introduction, the key to our framework is to propose an efficient algorithm to cope with $\beta$-subproblem.
\vspace{-2.5mm}
To simplify notation, we consider the prototypical form~\eqref{GP} of the $\beta$-subproblem here. It can be shown that the $\beta$-subproblem~\eqref{eq:B} satisfies Assumptions~\ref{assp:1} and~\ref{assp:2} in Section~\ref{sec:prelim}. Now, we present our proposed LP-ADMM in Algorithm~\ref{a1}.

\begin{algorithm}[!htbp]
	\SetAlgoLined
	\KwIn{Choose initial point $(x^0,y^0,w^0) \in \mathbb{R}^n \times \mathbb{R}^N \times \mathbb{R}^N$ and number of iterations $K$; \\
		\quad  \quad \quad {\color{blue}{Initialized the penalty parameter $\rho_0$ and shrinking parameter $\gamma \geq 1$};} }
	\KwOut{$\{(x^k,y^k,w^k)\}_{k=1}^K$ and $\{F(x^k,y^k)\}_{k=1}^K$;}
	\For{each iteration }{
		\begin{equation}
		\begin{aligned}
		& x^{k+1} = \arg\min\limits_{x \in \mathbb{R}^n} \left\{\frac{\rho_k}{2} \left\| Ax-y^k-\frac{w^k}{\rho_k} \right\|_2^2 +g(x)\right\};\\ \notag
		& y^{k+1} =
		\arg\min\limits_{y \in \mathbb{R}^N} \left\{ \frac{\rho_k}{2} \left\| y- \left( Ax^{k+1}-\frac{w^k+\nabla f(y^k)}{\rho_k} \right) \right\|_2^2 +P(y)\right\};  \\
		& w^{k+1} = w^{k}-\rho_k(Ax^{k+1}-y^{k+1});\\
		& {\color{blue}{\rho_{k+1} = \gamma\rho_{k}}~ (\mbox{in particular, if } \gamma=1 \mbox{, then } \rho_k=\rho_{k+1}=\rho)}; \\
		\end{aligned}
		\end{equation}}
	\caption{Linearized Proximal ADMM (LP-ADMM) for Solving \eqref{GP}}
	\label{a1}
\end{algorithm}

%\begin{Remark}
%	If $\gamma =1$, we have $\rho_0=\rho_1=\cdots=\rho_k =\rho$;
%\end{Remark}

The $x$-update is standard in ADMM-type algorithms and leads to a box-constrained quadratic optimization problem. The crux of our algorithm lies in the local model used to perform the $y$-update.
To understand the local model, observe that since the coupling matrix for $y$ in the constraint $Ax-y=0$ is the identity, the augmented Lagrangian function $\mathcal{L}_\rho(\cdot,\cdot;\cdot)$ in~\eqref{alf} is strongly convex in $y$. Thus, instead of using the quadratic approximation of $f(\cdot)$ as in the vanilla proximal ADMM, we can use the first-order approximation $y\mapsto\hat{f}(y;y^k) = f(y^k) + \nabla f(y^k)^T(y-y^k)$ at the current iterate $y^k$. This leads to the $y$-update
\begin{equation*}
\label{eq:yupdate}
y^{k+1} = \arg\min_y\left\{\hat{f}(y;y^k) -\langle w^k,Ax^{k+1}-y\rangle +\frac{\rho}{2}\|y-Ax^{k+1}\|_2^2+P(y)\right\},
\end{equation*}
which, as can be easily verified, is equivalent to the update given in Algorithm~\ref{a1}. In fact, using such a first-order local model not only makes the resulting algorithm converge faster in practice but also eliminates the need to perform step size selection. The latter makes our algorithmic framework numerically more robust in general.

Next, let us analyze the convergence behavior of the LP-ADMM. Based on the definition of the augmented Lagrangian function in \eqref{alf}, the optimality conditions of the subproblems in Algorithm~\ref{a1} can be written as follows: 
\begin{align}
&0 \in \rho A^T \left( Ax^{k+1}-y^k-\frac{w^k}{\rho} \right) + \partial g(x^{k+1}),\label{eq:1}\\ 
&0 \in \nabla f(y^k)+\rho \left( y^{k+1}-Ax^{k+1}+\frac{w^k}{\rho} \right) + \partial P(y^{k+1}).\label{eq:2}
% &{\color{blue}{Ax^{k+1}-y^{k+1}=0}}\label{eq:3}.
\end{align}
Using~\eqref{eq:1} and~\eqref{eq:2}, we can establish the following basic properties concerning the iterates of our proposed LP-ADMM. The proofs can be found in Appendix A. 

\begin{proposition}
	\label{prop1}
	Suppose that we use a constant penalty parameter $\rho$ that satisfies $\rho > (\sqrt{3}+1)L_f$. Let $\{(x^{k},y^{k},w^{k})\}_{k\ge0}$ be the sequence generated by the LP-ADMM and $(x^{*},y^{*},w^{*})$ be a point satisfying the KKT conditions \eqref{KKT} with $x^*\in\mathcal{X},y^*\in\mathcal{Y}$. Then, the following hold:
	
	\begin{enumerate}[(a)] 
		\item For all $k\ge1$, $\|Ax^{k+1}-y^k\|_2^2 \ge \frac{1}{2}\|y^{k+1}-y^k\|_2^2-\frac{{L_f}^2}{\rho^2}\|y^k-y^{k-1}\|_2^2$.
		\item For all $k\ge0$ and $(x,y)$ satisfying $Ax-y=0$, we have $F(x^{k+1},y^{k+1})-F(x,y) \leq \frac{1}{2\rho}(\|w^{k}\|_2^2-\|w^{k+1}\|_2^2) + \frac{\rho}{2}(\|y^k-y\|_2^2-\|y^{k+1}-y\|_2^2)
		+c(\|y^k-y^{k-1}\|_2^2-\|y^{k+1}-y^{k}\|_2^2) +(B_f(y,y^{k+1})-B_f(y,y^{k}))$, where $c = \frac{\rho-2L_f}{4}$.
		\item The sequence $\{\frac{1}{2\rho}\|w^{k}-w^*\|_2^2+\frac{\rho}{2}\|y^{k}-y^*\|_2^2-B_f(y^*,y^{k})\}_{k\ge0}$ is non-increasing and bounded below. 
	\end{enumerate}
\end{proposition}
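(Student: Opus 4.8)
The three parts form a chain: (a) is a one-step technical estimate, (b) is an approximate-descent inequality for the objective, and (c) assembles these into a monotone merit (Lyapunov) function. The plan is to drive everything from the two subproblem optimality conditions \eqref{eq:1}--\eqref{eq:2}, the dual update $w^{k+1}=w^k-\rho(Ax^{k+1}-y^{k+1})$, convexity of $g$ and $P$, and the $L_f$-smoothness of $f$ from Assumption~\ref{assp:2}. For (a), I would first rewrite \eqref{eq:2} via the dual update as $-\nabla f(y^k)-w^{k+1}\in\partial P(y^{k+1})$, and likewise $-\nabla f(y^{k-1})-w^k\in\partial P(y^k)$ at the previous step. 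Monotonicity of $\partial P$ applied to these two inclusions, together with $w^{k+1}-w^k=-\rho(Ax^{k+1}-y^{k+1})$, Cauchy--Schwarz, and Assumption~\ref{assp:2}, yields the one-sided bound $\langle Ax^{k+1}-y^{k+1},\,y^{k+1}-y^k\rangle\ge-\tfrac{L_f}{\rho}\|y^k-y^{k-1}\|_2\|y^{k+1}-y^k\|_2$. I would then expand $\|Ax^{k+1}-y^k\|_2^2=\|(Ax^{k+1}-y^{k+1})+(y^{k+1}-y^k)\|_2^2$ and minimize over the component $Ax^{k+1}-y^{k+1}$ subject to this constraint; the sharp constant $L_f^2/\rho^2$ comes precisely from \emph{retaining} $\|Ax^{k+1}-y^{k+1}\|_2^2$ when completing the square, rather than discarding it.

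For (b), the plan is a three-term estimate. Convexity of $g$ with the subgradient read off from \eqref{eq:1} bounds $g(x^{k+1})-g(x)$; convexity of $P$ with the subgradient $-\nabla f(y^k)-w^{k+1}$ bounds $P(y^{k+1})-P(y)$; and the first-order model together with the descent lemma handles $f$, producing the Bregman terms through $\hat f(y;y^k)=f(y)-B_f(y,y^k)$. Using that the coupling matrix for $y$ is the identity (so the $y$-subproblem is $\rho$-strongly convex), I would fold all inner products with $w^k$ into $(w^k)^\top(Ax^{k+1}-y^{k+1})$ and, via the dual update, telescope them into $\tfrac{1}{2\rho}(\|w^k\|_2^2-\|w^{k+1}\|_2^2)$. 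The remaining quadratic residue collapses, using $(Ax^{k+1}-y^{k+1})+(y^{k+1}-y^k)=Ax^{k+1}-y^k$, to exactly $\tfrac{L_f}{2}\|y^{k+1}-y^k\|_2^2-\tfrac{\rho}{2}\|Ax^{k+1}-y^k\|_2^2$; invoking part (a) then converts this into the stated $c(\|y^k-y^{k-1}\|_2^2-\|y^{k+1}-y^k\|_2^2)$ with $c=\tfrac{\rho-2L_f}{4}$. This last step is where $\rho>(\sqrt3+1)L_f$ enters: it is precisely the condition $\tfrac{L_f^2}{2\rho}\le c$ (equivalently $\rho^2-2L_f\rho-2L_f^2\ge0$) needed for the leftover $\|y^k-y^{k-1}\|_2^2$ coefficient to be absorbed into $c$.

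For (c), I would apply (b) with $(x,y)=(x^*,y^*)$. Since $(x^*,y^*,w^*)$ satisfies \eqref{KKT}, it is a saddle point of the ordinary Lagrangian, giving $F(x^{k+1},y^{k+1})-F(x^*,y^*)\ge\langle w^*,Ax^{k+1}-y^{k+1}\rangle=\tfrac1\rho\langle w^*,w^k-w^{k+1}\rangle$. Combining this lower bound with the $\tfrac{1}{2\rho}(\|w^k\|_2^2-\|w^{k+1}\|_2^2)$ term and completing the square turns the dual part into $\tfrac{1}{2\rho}(\|w^k-w^*\|_2^2-\|w^{k+1}-w^*\|_2^2)$, while the $y$- and Bregman-terms telescope directly. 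This shows the stated sequence $\tfrac{1}{2\rho}\|w^k-w^*\|_2^2+\tfrac{\rho}{2}\|y^k-y^*\|_2^2-B_f(y^*,y^k)$, together with the nonnegative proximal-gap term $c\|y^k-y^{k-1}\|_2^2$ produced by (b), is non-increasing. Boundedness below is immediate: by Assumption~\ref{assp:2}, $B_f(y^*,y^k)\le\tfrac{L_f}{2}\|y^*-y^k\|_2^2$, so $\tfrac{\rho}{2}\|y^k-y^*\|_2^2-B_f(y^*,y^k)\ge\tfrac{\rho-L_f}{2}\|y^k-y^*\|_2^2\ge0$ since $\rho>L_f$, and the remaining terms are squared norms.

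The main obstacle I anticipate is the quadratic bookkeeping in (b): correctly matching every cross term so the residue collapses to the clean form $\tfrac{L_f}{2}\|y^{k+1}-y^k\|_2^2-\tfrac{\rho}{2}\|Ax^{k+1}-y^k\|_2^2$, and then threading part (a) through to land on the exact constant $c$ under the stated $\rho$-threshold. A secondary subtlety is that the $x$-subproblem is only convex (not strongly convex, as $A$ need not have full column rank), so for $x$ I can use only a subgradient inequality for $g$, never a prox-type strong-convexity bound; all of the strong convexity in the analysis must be supplied by the identity coupling of $y$.
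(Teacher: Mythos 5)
Your plan is essentially the paper's own proof: part (a) from a contraction property of the $y$-update, part (b) by reading subgradients off \eqref{eq:1}--\eqref{eq:2}, folding the multiplier terms via the dual update into $\tfrac{1}{2\rho}(\|w^k\|_2^2-\|w^{k+1}\|_2^2)$, reducing the residue to $\tfrac{L_f}{2}\|y^{k+1}-y^k\|_2^2-\tfrac{\rho}{2}\|Ax^{k+1}-y^k\|_2^2$ and absorbing it through (a) under exactly the condition $\rho^2-2L_f\rho-2L_f^2\ge0$, and part (c) by setting $(x,y)=(x^*,y^*)$, invoking the saddle-point inequality $F(x^{k+1},y^{k+1})-F(x^*,y^*)\ge\langle w^*,Ax^{k+1}-y^{k+1}\rangle$ and completing the square in $w$. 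Two local steps differ, neither fatally. For (a) the paper writes the $y$-update as a prox step and uses nonexpansiveness of $\prox_{P/\rho}$ at consecutive iterates together with $\|a+b\|_2^2\le 2\|a\|_2^2+2\|b\|_2^2$; your route via monotonicity of $\partial P$ is the same tool in different clothing and lands on the same constants. For (b) the paper bounds $f+P$ jointly by convexity using the subgradient $\nabla f(y^{k+1})-\nabla f(y^k)-w^{k+1}$ and then applies the three-point identity $\langle\nabla f(y^{k+1})-\nabla f(y^k),y^{k+1}-y\rangle=B_f(y^{k+1},y^k)+B_f(y,y^{k+1})-B_f(y,y^k)$, which is precisely where the telescoping difference $B_f(y,y^{k+1})-B_f(y,y^k)$ in the statement comes from; your descent-lemma-plus-convexity-of-$P$ split instead produces only $-B_f(y,y^k)$. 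That is a tighter bound and still implies the stated (b) since $B_f(y,y^{k+1})\ge0$, but if you want the literal telescoping form that feeds the $-B_f(y^*,y^k)$ term into the Lyapunov function of (c), the three-point identity is the cleaner route. Finally, your observation that the genuinely monotone quantity is $m_k$ plus a nonnegative correction proportional to $\|y^k-y^{k-1}\|_2^2$ is accurate --- the paper's own proof of (c) has exactly the same feature (it proves monotonicity of $m_k+\tfrac{L_f^2}{2\rho}\|y^k-y^{k-1}\|_2^2$), so you are not losing anything relative to the reference argument.
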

Armed with Proposition \ref{prop1}, we can prove the main convergence theorem for LP-ADMM.
\begin{theorem}
	Consider the setting of Proposition~\ref{prop1}. Set $\bar{x}^K =\frac{1}{K}\sum_{k=1}^Kx^k$ and $\bar{y}^K =\frac{1}{K}\sum_{k=1}^Ky^k$. Then, the following hold:
	%  for any $x^*\in\mathcal{X},y^*\in\mathcal{Y}$ satisfying the KKT conditions \eqref{GP}, we have
	\begin{enumerate}[(a)] 
		\item The sequence $\{(x^{k},y^{k},w^{k})\}_{k\ge0}$ converges to a KKT point of problem \eqref{GP}. 
		\item The sequence of function values converges at the rate $\mathcal{O}(\frac{1}{K})$:
		\[F(\bar{x}^K,\bar{y}^K)-F(x^*,y^*) \leq\frac{ (1/2\rho)\|w^0\|_2^2+ (\rho/2)\|y^*-y^0\|_2^2+c\|y^0-y^1\|_2^2}{K}=\mathcal{O}(\frac{1}{K}).\]
	\end{enumerate}
\end{theorem}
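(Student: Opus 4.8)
The plan is to prove part~(b) first, since it follows mechanically from the one-step inequality in Proposition~\ref{prop1}(b), and then to leverage Proposition~\ref{prop1}(c) to obtain the subsequential-then-full convergence asserted in part~(a).

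For part~(b), I would instantiate Proposition~\ref{prop1}(b) at the fixed feasible point $(x,y)=(x^*,y^*)$ (feasible since $Ax^*-y^*=0$) and sum the resulting inequality over $k=0,\dots,K-1$, so that the left-hand side becomes $\sum_{k=1}^{K}\bigl(F(x^k,y^k)-F(x^*,y^*)\bigr)$. On the right-hand side the four groups of terms are all in telescoping form: the multiplier terms collapse to $\tfrac{1}{2\rho}(\|w^0\|_2^2-\|w^K\|_2^2)$, the distance-to-$y^*$ terms to $\tfrac{\rho}{2}(\|y^0-y^*\|_2^2-\|y^K-y^*\|_2^2)$, the consecutive-difference terms (with constant $c$) to $c\,\|y^0-y^1\|_2^2$ up to a nonpositive remainder $-c\|y^K-y^{K-1}\|_2^2$, and the Bregman terms to $B_f(y^*,y^K)-B_f(y^*,y^0)$. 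Dropping the nonpositive final contributions $-\tfrac{1}{2\rho}\|w^K\|_2^2$, $-c\|y^K-y^{K-1}\|_2^2$, and $-B_f(y^*,y^0)$ yields exactly the claimed numerator, provided the leftover pair $B_f(y^*,y^K)-\tfrac{\rho}{2}\|y^K-y^*\|_2^2$ is nonpositive. This is where Assumption~\ref{assp:2} enters: since $\nabla f$ is $L_f$-Lipschitz, $B_f(y^*,y^K)\le \tfrac{L_f}{2}\|y^*-y^K\|_2^2$, so the pair is bounded by $-\tfrac{\rho-L_f}{2}\|y^K-y^*\|_2^2\le 0$ because $\rho>(\sqrt3+1)L_f>L_f$. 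Finally, as $f,P,g$ are convex, $F$ is jointly convex, so Jensen's inequality gives $F(\bar x^K,\bar y^K)\le\tfrac1K\sum_{k=1}^K F(x^k,y^k)$ (note $\bar x^K$ stays in the norm ball by convexity, hence $g(\bar x^K)=0$); dividing the summed inequality by $K$ then produces the stated $\mathcal{O}(1/K)$ bound.

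For part~(a), I would start from Proposition~\ref{prop1}(c): the sequence $V_k=\tfrac{1}{2\rho}\|w^k-w^*\|_2^2+\tfrac{\rho}{2}\|y^k-y^*\|_2^2-B_f(y^*,y^k)$ is non-increasing and bounded below, hence convergent. Using $0\le B_f(y^*,y^k)\le\tfrac{L_f}{2}\|y^*-y^k\|_2^2$ and $\rho>L_f$ gives $V_k\ge\tfrac{1}{2\rho}\|w^k-w^*\|_2^2+\tfrac{\rho-L_f}{2}\|y^k-y^*\|_2^2$, so $\{w^k\}$ and $\{y^k\}$ are bounded, and boundedness of $\{x^k\}$ follows from the norm-ball constraint. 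The crucial next step is to show the successive differences vanish: $\|y^{k+1}-y^k\|_2\to0$ and $\|w^{k+1}-w^k\|_2=\rho\|Ax^{k+1}-y^{k+1}\|_2\to0$. Once these hold, I would extract a convergent subsequence $(x^{k_j},y^{k_j},w^{k_j})\to(\bar x,\bar y,\bar w)$ and pass to the limit in the optimality conditions~\eqref{eq:1}--\eqref{eq:2}: rewriting them as $A^Tw^{k+1}-\rho A^T(y^{k+1}-y^k)\in\partial g(x^{k+1})$ and $-w^{k+1}-\nabla f(y^k)\in\partial P(y^{k+1})$, the vanishing differences together with continuity of $\nabla f$ and closedness of $\partial g,\partial P$ show that $(\bar x,\bar y,\bar w)$ satisfies~\eqref{KKT}. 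To upgrade this to convergence of the whole sequence, I would re-apply Proposition~\ref{prop1}(c) with the KKT point $(x^*,y^*,w^*)$ taken to be this limit: then $V_k$ converges and $V_{k_j}\to0$, forcing $V_k\to0$, and the lower bound above forces $(y^k,w^k)\to(\bar y,\bar w)$; convergence of $x^k$ then follows from $Ax^{k+1}-y^{k+1}\to0$ (via injectivity of $A$, or uniqueness of the $x$-update limit point).

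The main obstacle is establishing the vanishing of the successive differences in part~(a). Proposition~\ref{prop1}(c) as stated only gives convergence of $V_k$, i.e.\ $V_k-V_{k+1}\to0$; to convert this into $\|y^{k+1}-y^k\|_2\to0$ and $\|Ax^{k+1}-y^{k+1}\|_2\to0$ I expect to need the sharp descent inequality behind Proposition~\ref{prop1}(c), in which $V_k-V_{k+1}$ dominates a strictly positive combination of $\|y^{k+1}-y^k\|_2^2$ and $\|w^{k+1}-w^k\|_2^2$. Because Proposition~\ref{prop1}(a)--(b) involve the lagged term $\|y^k-y^{k-1}\|_2^2$, a clean descent likely requires working with the augmented Lyapunov function $V_k+\theta\|y^k-y^{k-1}\|_2^2$ for a suitable $\theta>0$, whose admissibility again hinges on the margin $\rho>(\sqrt3+1)L_f$; summability of the residuals then follows by telescoping, and the differences go to zero.
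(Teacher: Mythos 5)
Your proposal is correct and follows essentially the same route as the paper: part (b) by instantiating Proposition~\ref{prop1}(b) at $(x^*,y^*)$, telescoping, and applying Jensen, and part (a) by obtaining boundedness from Proposition~\ref{prop1}(c), establishing $\|y^{k+1}-y^k\|\to 0$ via exactly the augmented Lyapunov sequence $m_k+\tfrac{L_f^2}{2\rho}\|y^k-y^{k-1}\|_2^2$ you anticipate, passing to the limit in the optimality conditions along a subsequence, and re-invoking the monotone quantity at the limit point to get whole-sequence convergence. Your handling of the leftover Bregman term in (b), pairing $B_f(y^*,y^K)\le\tfrac{L_f}{2}\|y^K-y^*\|_2^2$ against $-\tfrac{\rho}{2}\|y^K-y^*\|_2^2$, is in fact tidier than the paper's remark that the telescoped Bregman difference is eventually negative.
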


\begin{Remark}
	The standard linearized ADMM in~\cite{lin2011linearized,yang2013linearized,xu2017accelerated} involves the quadratic term $\frac{\eta}{2}\|y - y^k\|^2$, where $\eta$ needs to satisfy $\eta>L_f$. Our LP-ADMM can be regarded as a linearized ADMM with $\eta = 0$. 
	Using the first-order local model, the LP-ADMM achieves the fastest single-step update.  Moreover, it is worth noting that the adaptive penalty strategy works well in practice, especially the geometrical increasing one (i.e., the blue line in Algorithm~\ref{a1}). 
\end{Remark}

%%%%%%%%%%%%%%%%%%%%%% experiments %%%%%%%%%%%%%%%%%%
\vspace{-\baselineskip}
\section{Experiment Results}
\vspace{-0.5\baselineskip}
In this section, we present numerical results to demonstrate the effectiveness and efficiency of the different components in our proposed algorithmic framework.  
All experiments were conducted using MATLAB R2018a on a computer running Windows 10 with Intel\textsuperscript{\textregistered} Core\texttrademark ~ i5-8600 CPU (3.10 GHz) and 16 GB RAM.
We conducted three different experiments to validate our theoretical results and show the high efficiency of our implementation of the proposed first-order algorithmic framework.
To begin, we compare the CPU time of our framework with the YALMIP solver used by \cite{shafieezadeh2015distributionally} on both synthetic and real datasets. 
Then, we present an empirical comparison of our LP-ADMM with other baseline first-order algorithms, including Projected SubGradient Method (SubGradient),  Primal-Dual Hybrid Gradient (PDHG), Linearized-ADMM and Standard ADMM, on the $\beta$-subproblem. 
Lastly, we show the test data performance of the DRLR model on real datasets. 
We use the active set conjugate gradient method to solve the box-constrained quadratic optimization problem~\eqref{eq:C} in this section. 
Our code is available at \href{https://github.com/gerrili1996/DRLR_NIPS2019_exp}{{\tt https://github.com/gerrili1996/DRLR\_NIPS2019\_exp}}.
%% Cputime comparision 
\subsection{CPU Time Comparison with the YALMIP Solver} \label{subsec:cpu}
\vspace{-0.5\baselineskip}
Our setup for the synthetic experiments is as follows. We first generate  $\beta$ from the standard $n$-dimensional Gaussian distribution $ \mathcal{N}(0, I_n)$ and normalize it to obtain the ground truth $\beta_* = {\beta}/{\|\beta\|}$. 
Next, we generate the feature vectors $\{\hat{x}_i\}_{i=1}^N$ independently and identically (i.i.d) from $\mathcal{N}(0, I_n)$ and the noisy measurements $\{z_i\}_{i=1}^N$ i.i.d from the uniform distribution over $[0,1]$. Lastly, we compute the ground truth labels $\{\hat{y}_i\}_{i=1}^N$ via $\hat{y}_i = 2\times\text{int}(z_i < \frac{1}{1+\exp(-\beta^T_*\hat{x}_i)}) -1$. 
We set the DRLR model parameters to be $\kappa =1,\epsilon =0.1$ and the default parameters of our Adaptive LP-ADMM to be $\rho_0 = 0.001, \gamma =1.05$.
All the experiment results reported here were averaged over 30 independent trials over random seeds.
Table \ref{table:syn} summarizes the comparison of CPU times on different scales in the synthetic setting.
Our experiment results indicate that the proposed LP-ADMM with adaptive penalty strategy can be over 800 times faster than YALMIP, a state-of-the-art optimization solver, and the performance gap grows considerably with problem size.
% Figure \ref{fig:syn}, we report all CPU implementation time of YALMIP Solver and our methods (including both non-adaptive and adaptive cases) under various synthetic settings. From Figure 1,  we observe that our method achieves the convergence significantly faster than the state of the art YALMIP solver, especially for the large data case (i.e., $N=10000,d=100$). In particular, our method with the adaptive penalty trick can achieve the same accuracy up to \textbf{800x} faster than the YAMLIP solver.

\begin{table}[ht]
	\centering 
	\begin{tabular}{c|c|c|c|c}
		\hline
		\hline
		($N$, $d$) & YALMIP (s) & Non-Adaptive (s) & Adaptive (s) & Ratio \\
		\hline
		(10,3) &$2.40\pm0.18$  & $0.06\pm0.02$ & $0.07\pm0.02$ & \textbf{37}  \\
		(100,3) & $3.29\pm 0.05$ & $0.14\pm0.03$ &$ 0.06\pm0.01$ & \textbf{54}  \\
		(100,10) & $3.34\pm 0.03$  & $0.21\pm0.03$& $0.08\pm0.01$  & \textbf{44}  \\
		(500,10) & $7.92\pm 0.17$  & $0.58\pm0.16$&$0.14\pm0.01$  & \textbf{55}  \\
		(500,50) & $8.53\pm 0.17$  & $0.60\pm0.03$ & $0.24\pm0.01$  & \textbf{36} \\
		(1000,50) & $16.44\pm 0.44$ & $0.96\pm0.07$ &$0.25\pm0.02$ & \textbf{67}  \\
		(1000,100) & $19.16\pm 0.48$ & $1.69\pm0.11$ &$0.38\pm0.01$  & \textbf{51}  \\
		(3000,50) &$65.87\pm 1.54$& $2.40\pm0.15$& $0.32\pm0.01$ & \textbf{206}  \\
		(3000,100) & $113.94\pm 2.05$ &$3.84\pm0.20$&$0.47\pm0.04$ & \textbf{243}  \\
		(5000,100) & $287.67\pm 2.67$ &$7.08\pm0.66$&$0.64\pm0.03$ & \textbf{451}  \\
		(10000,10) & $ 283.25\pm 18.98$ &$5.03\pm0.76$&$0.50\pm0.02$ & \textbf{563}  \\
		(10000,100) & $1165.40\pm 26.52$ & $19.75\pm3.74$&$1.37\pm0.12$ & $\textbf{852}$  \\  
		\hline
		\hline
	\end{tabular}
	\caption{CPU time comparison: LP-ADMM vs. YALMIP (used in~\cite{shafieezadeh2015distributionally}) in the synthetic setting}
	\label{table:syn}
\end{table}

%
%\begin{figure}[h]
%\begin{minipage}{0.25\textwidth}
%\centering
% \includegraphics[height=5.5cm]{figure/sythetic.jpg}
%\end{minipage}
%\hfill
%\begin{minipage}{0.74\textwidth}  
%\centering
%    \begin{tabular}{c|c|c|c|c}
%    \hline
%    \hline
%  ($N$, $d$) & YALMIP(s) & Non-Adaptive(s) & Adaptive(s) & Ratio \\
%  \hline
%  (10,3) &$2.40\pm0.18$  & $0.06\pm0.02$ & $0.07\pm0.02$ & \textbf{37}  \\
%  (100,3) & $3.29\pm 0.05$ & $0.14\pm0.03$ &$ 0.06\pm0.01$ & \textbf{54}  \\
%  (100,10) & $3.34\pm 0.03$  & $0.21\pm0.03$& $0.08\pm0.01$  & \textbf{44}  \\
%  (500,10) & $7.92\pm 0.17$  & $0.58\pm0.16$&$0.14\pm0.01$  & \textbf{55}  \\
%  (500,50) & $8.53\pm 0.17$  & $0.60\pm0.03$ & $0.24\pm0.01$  & \textbf{36} \\
%  (1000,50) & $16.44\pm 0.44$ & $0.96\pm0.07$ &$0.25\pm0.02$ & \textbf{67}  \\
%  (1000,100) & $19.16\pm 0.48$ & $1.69\pm0.11$ &$0.38\pm0.01$  & \textbf{51}  \\
%  (3000,50) &$65.87\pm 1.54$& $2.40\pm0.15$& $0.32\pm0.01$ & \textbf{206}  \\
%  (3000,100) & $113.94\pm 2.05$ &$3.84\pm0.20$&$0.47\pm0.04$ & \textbf{243}  \\
%  (5000,100) & $287.67\pm 2.67$ &$7.08\pm0.66$&$0.64\pm0.03$ & \textbf{451}  \\
%  (10000,10) & $ 283.25\pm 18.98$ &$5.03\pm0.76$&$0.50\pm0.02$ & \textbf{563}  \\
%  (10000,100) & $1165.40\pm 26.52$ & $19.75\pm3.74$&$1.37\pm0.12$ & $\textbf{852}$  \\
%    % (50000,500) & $-$ &$120.38\pm5.33$ & $-$  \\
%    \hline
%    \hline
%    \end{tabular}
%\end{minipage}
%\centering
%\caption{Compare the running time ($s$) with the YALMIP Solver in \cite{shafieezadeh2015distributionally} under the synthetic setting}
%\label{fig:syn}
%\end{figure}

We also tested our proposed method on the real datasets \texttt{a1a-a9a} downloaded from LIBSVM\footnote{\url{https://www.csie.ntu.edu.tw/~cjlin/libsvmtools/datasets/binary.html}}. 
Note that the data matrices from these datasets are ill-conditioned and highly sparse, which should be contrasted with the well-conditioned and dense ones in the synthetic setting.
Table \ref{table:uci} shows the comparison of CPU times on the real datasets.  
We observe that our methods work exceptionally well, especially in the large-scale case (i.e., \texttt{a9a}). 
\begin{table}[ht]
	\centering
	\begin{tabular}{c|cc|cc|c}
		\hline
		\hline
		\multirow{2}{*}{Dataset} & \multicolumn{2}{c}{Data statistics}\vline & \multicolumn{2}{c}{CPU time (s)} \vline& \multirow{2}{*}{Ratio} \\
		& \multicolumn{1}{l}{Samples} & \multicolumn{1}{l}{Features} \vline& YALMIP & Ours \\
		\hline
		a1a   & 1605  & 123   & 25.63 & $\bm{2.93}$ & $\bm{9}$ \\
		a2a   & 2265  & 123   & 39.20 & $\bm{3.53}$  & $\bm{11}$ \\
		a3a   & 3185  & 123   & 57.79 & $\bm{4.26}$ & $\bm{14}$ \\
		a4a   & 4781  & 123   & 105.32 & $\bm{4.56}$  & $\bm{23}$ \\
		a5a   & 6414  & 123   & 155.42 & $\bm{4.39}$  & $\bm{35}$ \\
		a6a   & 11220 & 123   & 413.65 & $\bm{4.68}$  & $\bm{88}$ \\
		a7a   & 16100 & 123   & 738.12 & $\bm{5.41}$  & $\bm{137}$\\
		a8a   & 22696 & 123   & 1396.45 & $\bm{5.81}$ & $\bm{240}$ \\
		a9a   & 32561 & 123   & 2993.30 & $\bm{7.08}$ & $\bm{423}$ \\
		\hline
		\hline
	\end{tabular}
	\caption{CPU time comparison: LP-ADMM vs. YALMIP (used in \cite{shafieezadeh2015distributionally}) on UCI adult datasets}
	\label{table:uci}
\end{table}

\begin{figure}[h]
	\centering
	\caption{CPU time comparison with YALMIP using interior point algorithm}
	\begin{minipage}[t]{0.4\textwidth}
		\centering
		\includegraphics[width=1\textwidth]{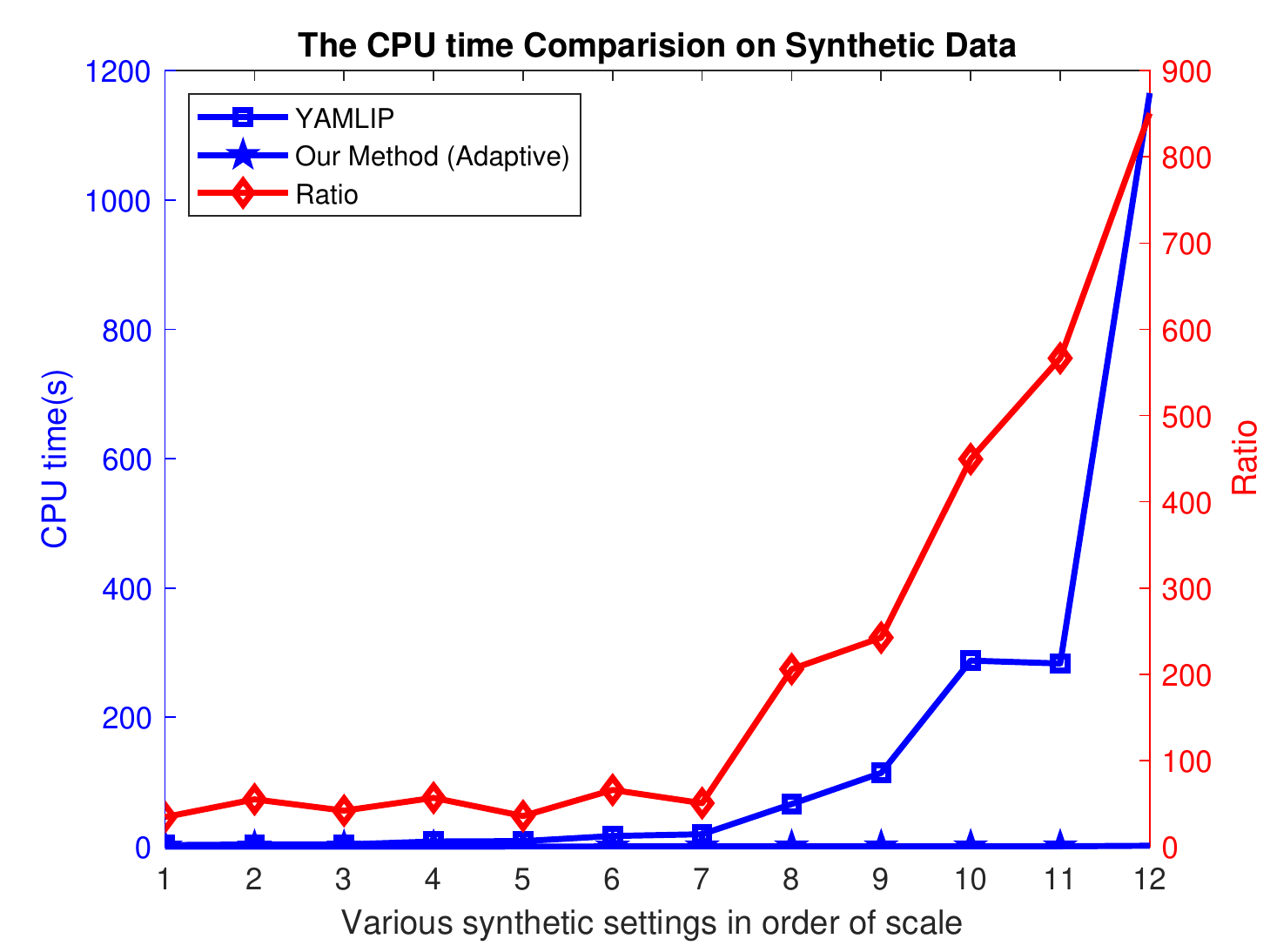}
	\end{minipage}
	\begin{minipage}[t]{0.4\textwidth}
		\centering
		\includegraphics[width=1\textwidth]{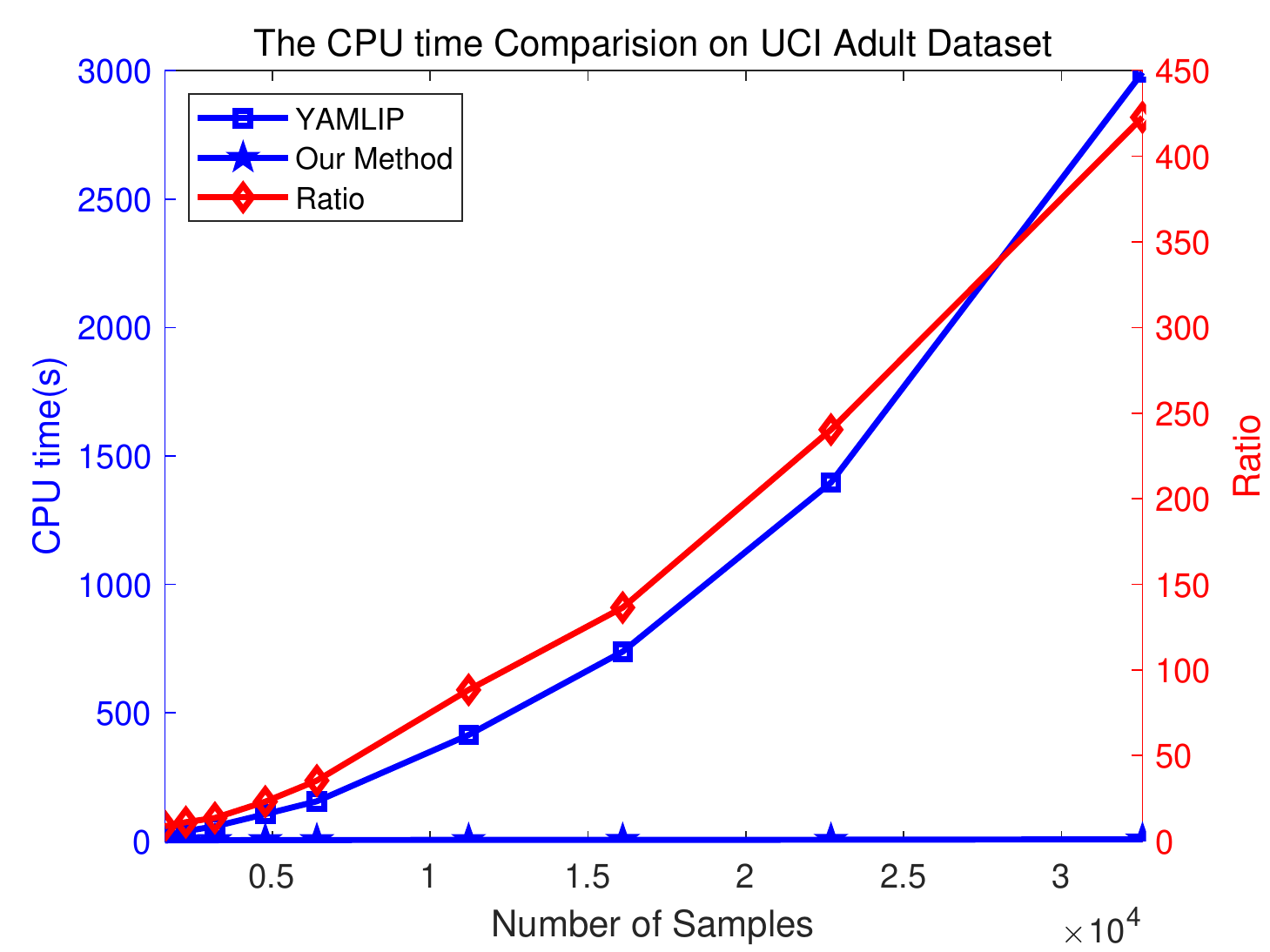}
	\end{minipage}
\end{figure}
As standard ADMM-type algorithms are very sensitive to the choice of penalty parameters, it is hard for us to tune the optimal penalty parameter for each subproblem with different $\lambda$. 
Thus, we use a constant penalty parameter for the non-adaptive case. In fact, since we do not need to perform a careful penalty parameter selection in our method, we can achieve an even greater speed by using an adaptive penalty strategy. Moreover, it is worth noticing that our approaches achieve higher-accuracy solutions compared with the YALMIP solver in all the experiments.

\vspace{-0.5\baselineskip}
\subsection{Efficiency of LP-ADMM for $\beta$-Subproblem} 
\vspace{-0.5\baselineskip}
To further demonstrate the efficacy of our proposed LP-ADMM on the $\beta$-subproblem, we present an empirical comparison of our algorithm with other first-order methods in the synthetic setting. 
The implementation details are given as follows: $\lambda$ is regarded as a constant (i.e., $\lambda = 0.1$), the DRLR model parameters are the same as in Section~\ref{subsec:cpu}, and the first-order methods used include

%\begin{figure}[h]
%\begin{minipage}{0.34\textwidth}
%\centering
% \includegraphics[height=5cm]{figure/adult.jpg}
%\end{minipage}
%\hfill
%\begin{minipage}{0.65\textwidth}  
%\centering
%    \begin{tabular}{c|cc|cc|c}
%    \hline
%    \hline
%    \multirow{2}{*}{Dataset} & \multicolumn{2}{c}{Data statistics}\vline & \multicolumn{2}{c}{Running time (s)} \vline& \multirow{2}{*}{Ratio} \\
%          & \multicolumn{1}{l}{Samples} & \multicolumn{1}{l}{Features} \vline& YALMIP & Ours \\
%          \hline
%    a1a   & 1605  & 123   & 25.63 & $\bm{2.93}$ & $\bm{9}$ \\
%    a2a   & 2265  & 123   & 39.20 & $\bm{3.53}$  & $\bm{11}$ \\
%    a3a   & 3185  & 123   & 57.79 & $\bm{4.26}$ & $\bm{14}$ \\
%    a4a   & 4781  & 123   & 105.32 & $\bm{4.56}$  & $\bm{23}$ \\
%    a5a   & 6414  & 123   & 155.42 & $\bm{4.39}$  & $\bm{35}$ \\
%    a6a   & 11220 & 123   & 413.65 & $\bm{4.68}$  & $\bm{88}$ \\
%    a7a   & 16100 & 123   & 738.12 & $\bm{5.41}$  & $\bm{137}$\\
%    a8a   & 22696 & 123   & 1396.45 & $\bm{5.81}$ & $\bm{240}$ \\
%    a9a   & 32561 & 123   & 2993.30 & $\bm{7.08}$ & $\bm{423}$ \\
%    \hline
%    \hline
%    \end{tabular}%
%\end{minipage}
%\centering
%\caption{ Compare the running time $(s)$ with the YALMIP Solver in \cite{shafieezadeh2015distributionally} for UCI adult datasets}
%\label{fig:real}
%\end{figure}

\begin{enumerate}[(a)]
	\item  Two-block Standard ADMM (SADMM) \cite{boyd2011distributed}: for both $\beta$- and $\mu$-updates, we perform exact minimization, which are done using accelerated projected gradient descent and semi-smooth Newton method~\cite{li2018highly}, respectively (pseudo-codes are given in Appendix B);
	\item  Primal-Dual Hybrid Gradient (PDHG) \cite{chambolle2011first};
	\item  Linearized-ADMM (LADMM): all the ingredients are the same as LP-ADMM, except that the $\mu$-update involves the classic quadratic term;
	\item  Projected Subgradient Method (SubGradient). 
\end{enumerate}
The convergence curves for various synthetic cases are shown in Figure \ref{fig:subp}.  The performance of our methods significantly dominates those of other methods, which agrees with our theoretical findings in Section~\ref{sec:alg}. 
Compared with LADMM, we show practical advantages of the first-order local model for the $\mu$-update. 
In addition, LP-ADMM and Adaptive LP-ADMM have similar performance in small instances, but the latter has better performance in large instances.  
In summary, we have demonstrated that the usefulness and efficiency of all the components in our first-order algorithmic framework.
In particular, as the data matrices in the real datasets are ill-conditioned, all the baseline approaches cannot achieve a high accuracy but our proposed LP-ADMM can.
%From Figure 3, we have the following observations. At first, we can observe that our methods show the significantly superb performance compared with other baselines, which can further corroborate our theoretical result in section 4. In other words, it is necessary and useful to exploit the specific structure to speed up the algorithm. Indeed,compared with LADMM, we show the practical advantages about our first-order approximation for proximal step (i.e., indefinite proximal term). Finally, LP-ADMM (non-adaptive) and Adaptive LP-ADMM has the similar performance after fixing the $\lambda$. However, adaptive one will perform better especially for the large case. In summary, all elements in our approach has been verified useful and also general enough to extend to other application. Specially, as the design matrix is ill conditioned for most of real-world datasets , all baseline cannot reach the high accuracy solution except for our proposed approach.
\begin{figure}[h]
	\caption{Comparison of LP-ADMM with other first-order methods on $\beta$-subproblem: $y$-axis is the sub-optimality gap $\log(F^k-F^*)$; ``total iterations'' refers to that taken by Adaptive ADMM}
	\centering
	\begin{minipage}[t]{0.31\textwidth}
		\centering
		\includegraphics[width=1\textwidth]{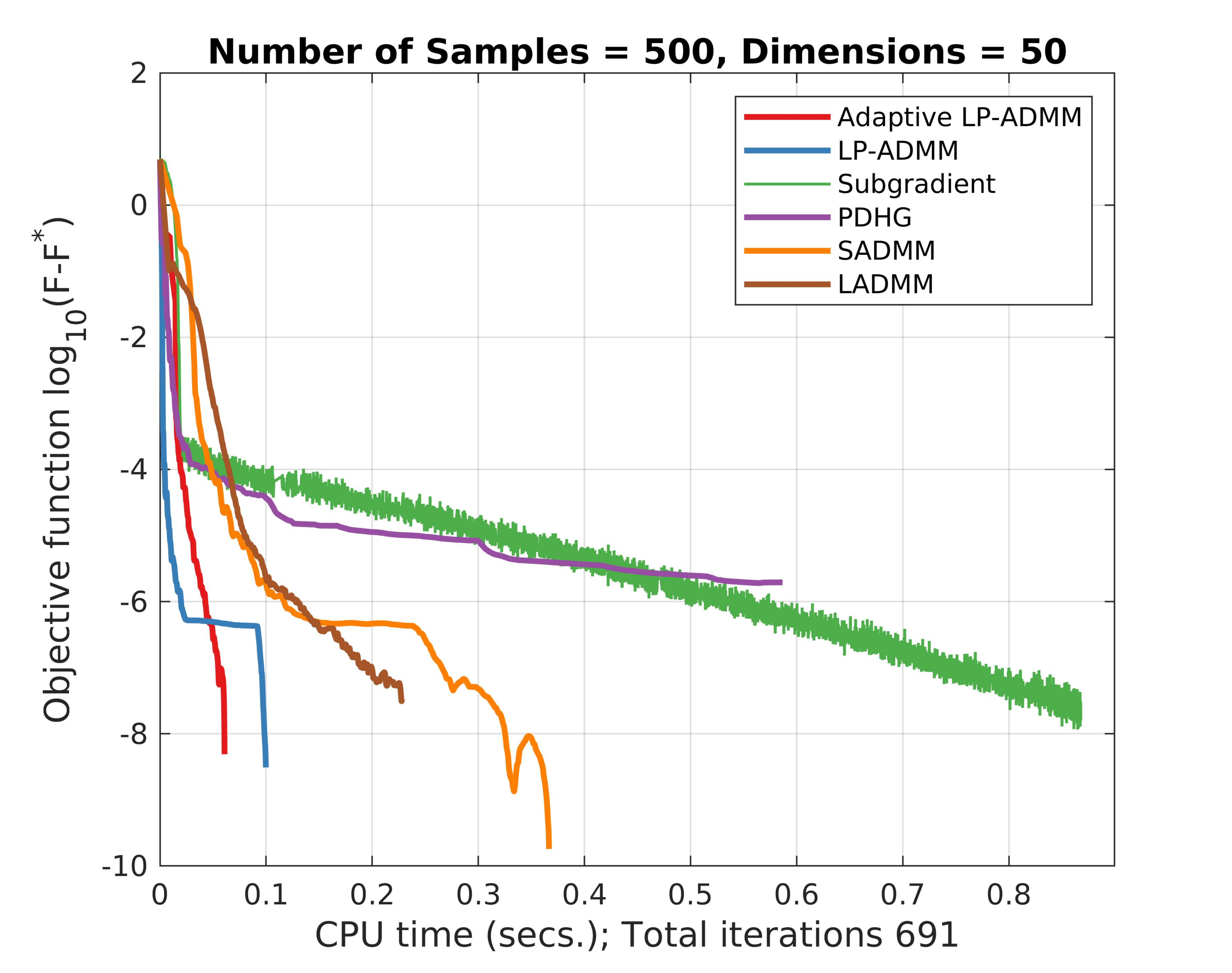}
	\end{minipage}
	\begin{minipage}[t]{0.31\textwidth}
		\centering
		\includegraphics[width=1\textwidth]{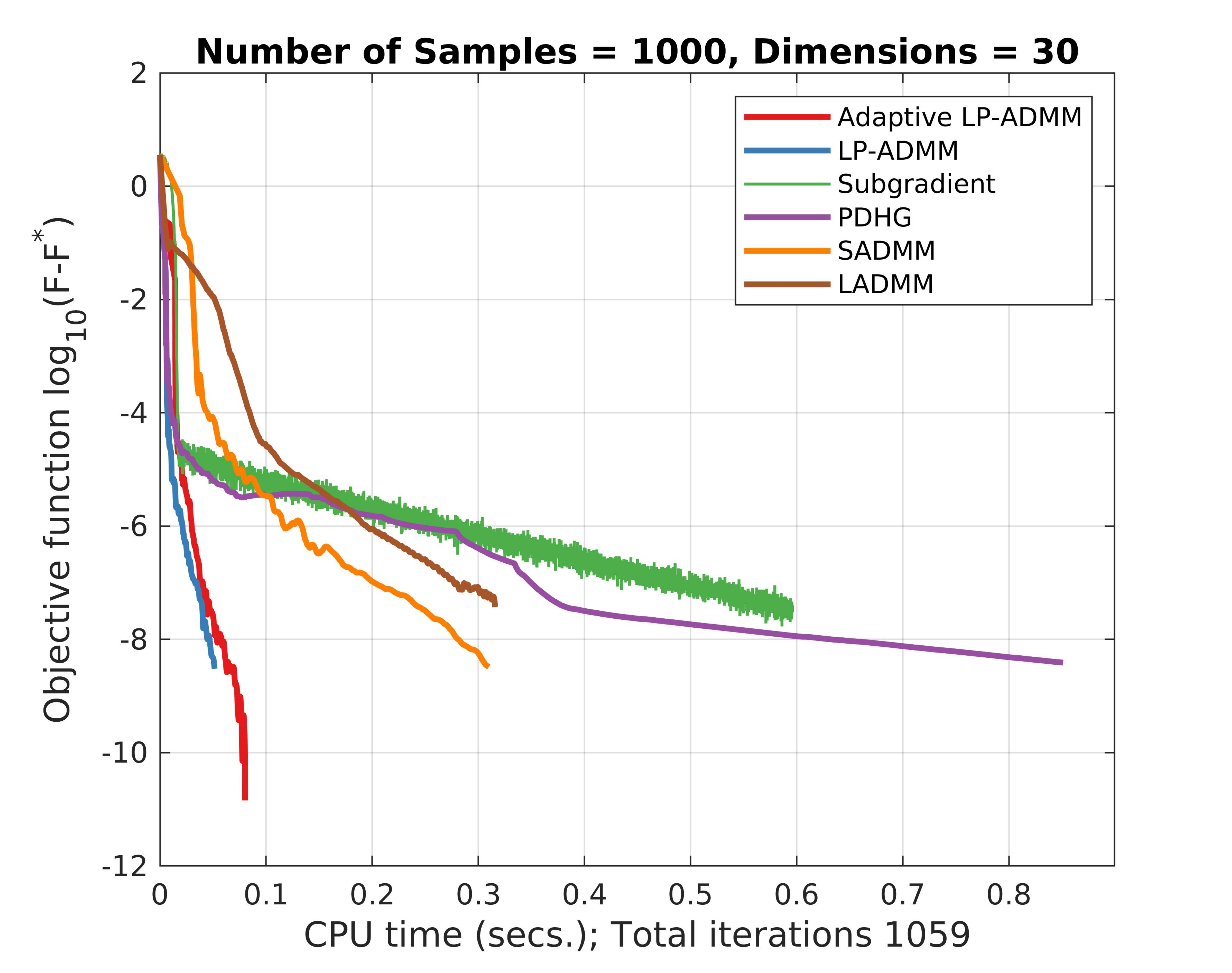}
	\end{minipage}
	\begin{minipage}[t]{0.31\textwidth}
		\centering
		\includegraphics[width=1\textwidth]{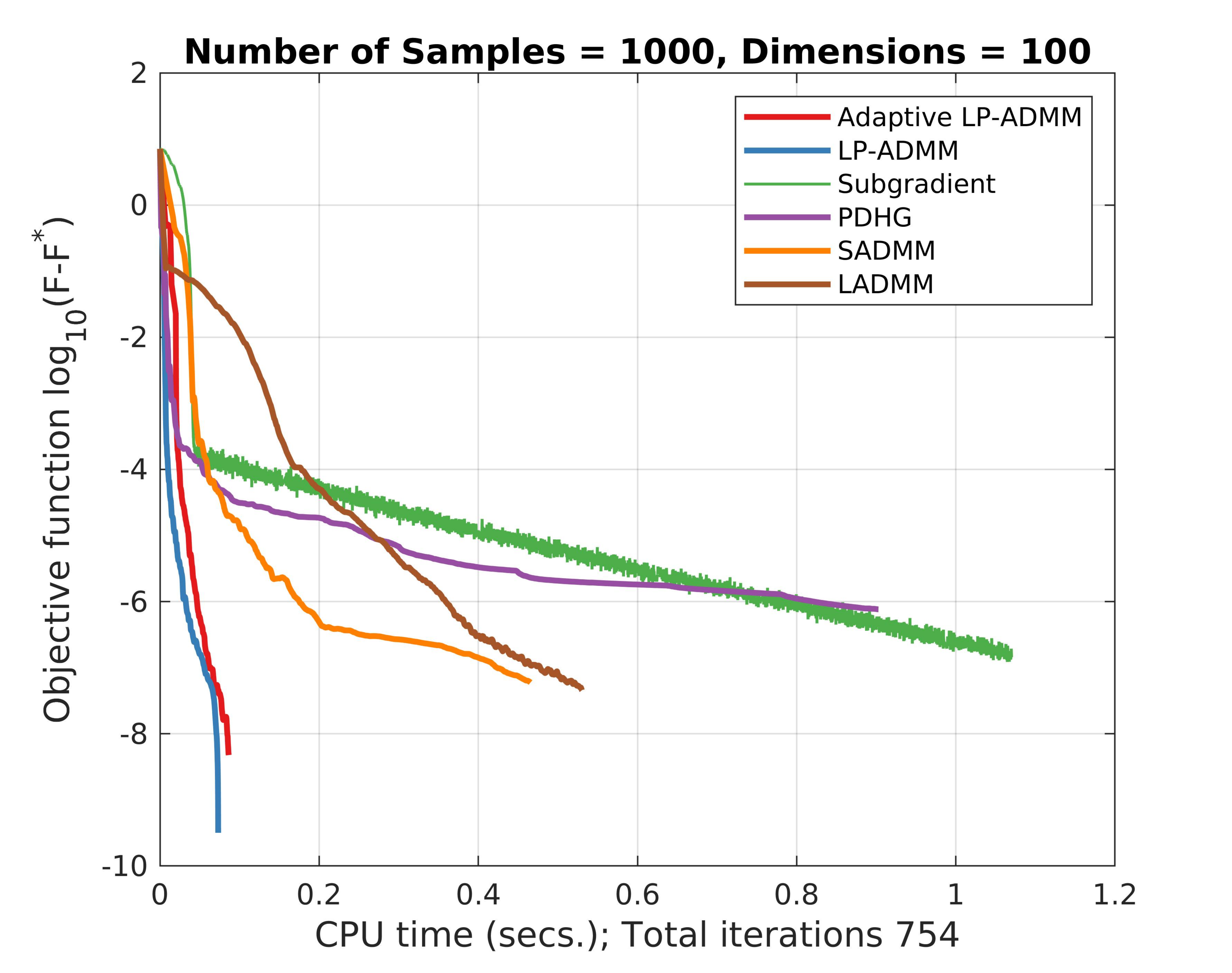}
	\end{minipage}\\
	\begin{minipage}[t]{0.31\textwidth}
		\centering
		\includegraphics[width=1\textwidth]{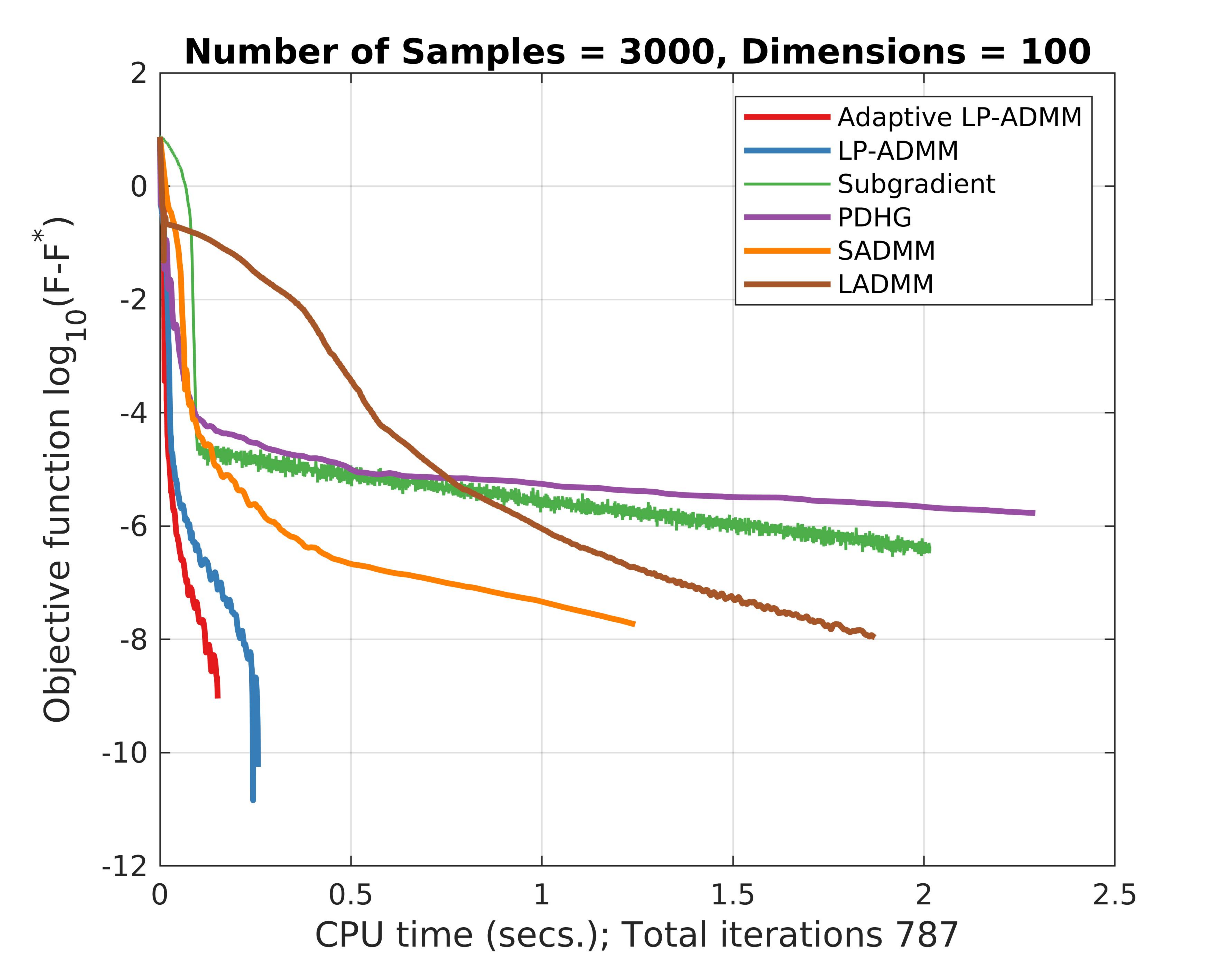}
	\end{minipage}
	\begin{minipage}[t]{0.31\textwidth}
		\centering
		\includegraphics[width=1\textwidth]{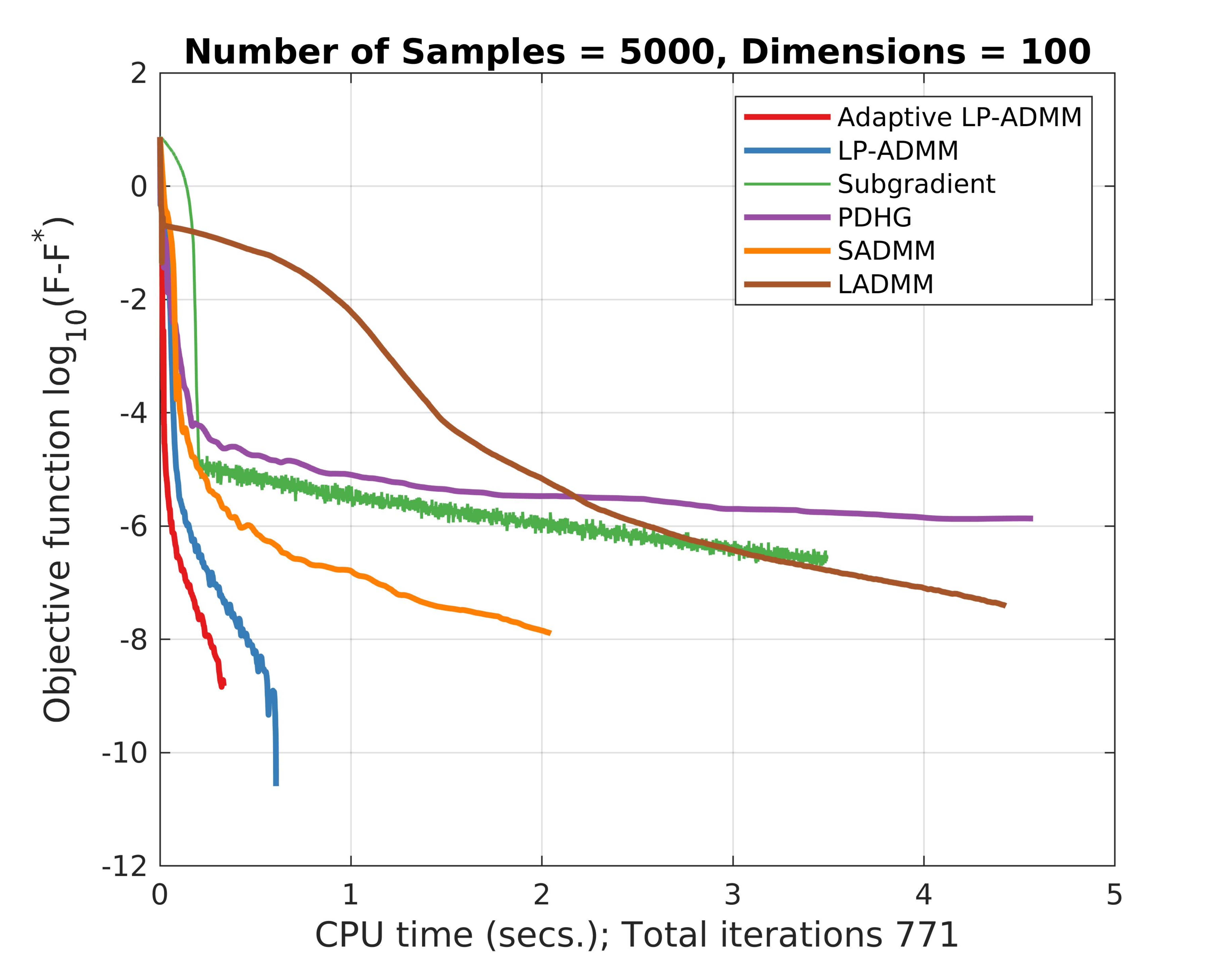}
	\end{minipage}
	\begin{minipage}[t]{0.31\textwidth}
		\centering
		\includegraphics[width=1\textwidth]{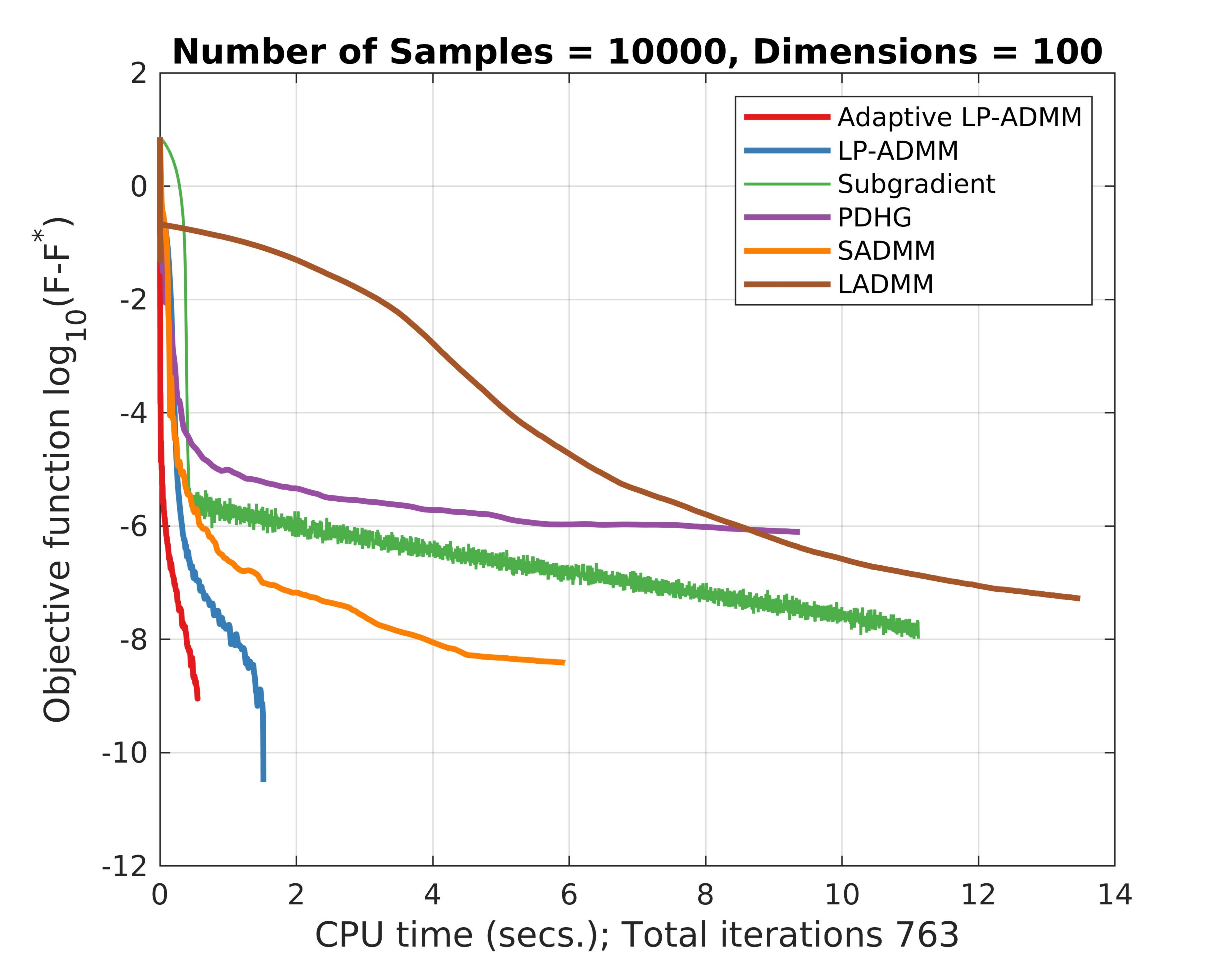}
	\end{minipage}
	\label{fig:subp}
\end{figure}
\subsection{Test Data Performance of the DRLR Model}

In this subsection, we compare the test data performance of the DRLR model with two classic models, namely,  Logistic Regression (LR) and Regularized Logistic Regression (RLR). The latter refers to
$\min\frac{1}{N} \sum_{i=1}^N \ell_{\beta}(\hat{x}_i,\hat{y}_i) + \epsilon\|\beta\|_\infty$.
If the training data labels are error-free (which corresponds to $\kappa = \infty$), then the DRLR model reduces to RLR~\cite{shafieezadeh2015distributionally}.  
We use grid search with cross-validation to select the parameters of the DRLR model (i.e., $\epsilon = 0.3, \kappa = 7$). 
In addition, we randomly select 60\% of the data to train the models and the rest to test the performance. 
As before, all the results reported here were averaged over 30 independent trials. 
Table \ref{tab:1} shows the average classification accuracy on the test data. 
We observe that the DRLR model consistently outperforms the two classic models over all datasets.
Thus, the distributionally robust optimization approach opens a new door for ameliorating the poor test performance in practice. 

% We present the test data accuracy performance of DRLR in the binary classification task. In this part of the experiment, two baselines are included,  logistic regression(LR) and regularized logistic regression(RLR) ($\kappa = \infty$), while the setting for DRLR is $\epsilon = 0.3$ and $\kappa = 7$ selected by grid search via cross validation. All numerical results are given in Table 1. In a nutshell, DRLR can improve the classification accuracy on the testing set, which demonstrate the modelling power of distributionally robust optimization technique on improving the generalization ability of the learning model. It opens a new door to apply the distributionally robust optimization technique to meliorate the poor test performance in practice. 
\begin{table}[h!]
	\centering
	\caption{Average classification accuracy on test datasets}
	\begin{tabular}{c|c|c|c}
		\hline
		\hline
		Dataset & LR & RLR ($\kappa = \infty$) &DRLR ($\kappa = 7$) \\
		\hline
		a1a   & $83.13\%$ & $83.82\%$ & $\bm{84.01\%}$\\
		a2a   & $83.68\%$ & $83.93\%$ & $\bm{84.24\%}$\\
		MNIST(0 vs 3)   & $99.15\%$ & $99.45\%$ & $\bm{99.55\%}$\\
		MNIST(0 vs 4)   & $99.39\%$ & $99.54\%$ & $\bm{99.75\%}$\\
		MNIST(0 vs 6)   & $97.88\%$ & $98.86\%$ & $\bm{98.92\%}$\\
		MNIST(2 vs 3)   & $96.87\%$ & $97.31\%$ & $\bm{97.40\%}$\\
		MNIST(2 vs 5)   & $96.67\%$ & $97.45\%$ & $\bm{97.77\%}$\\
		MNIST(5 vs 8)  & $94.80\%$ & $94.91\%$ & $\bm{95.18\%}$\\
		MNIST(5 vs 9)  & $97.21\%$ & $98.11\%$ & $\bm{98.47\%}$\\
		MNIST(6 vs 9)  & $99.54\%$ & $99.59\%$ & $\bm{99.80\%}$\\
		\hline
		\hline
	\end{tabular}%
	\label{tab:1}%
\end{table}%

%%%%%%%%%%%%%%%%%%%%conclusion%%%%%%%%%%%%%%%%%%%%%%%%%%%%%	
	\vspace{-\baselineskip}
	\section{Conclusion and Future Work}
	\vspace{-0.5\baselineskip}
	In this paper, we have proposed a first-order algorithmic framework to solve a class of  Wasserstein distance-based distributionally robust logistic regression (DRLR) problem. The core step of our framework is the efficient solution of the $\beta$-subproblem. Towards that end, we have developed a novel ADMM-type algorithm (the LP-ADMM) and established its sublinear rate of convergence. We have also conducted extensive experiments to verify the practicality of our framework. It is worth noting that problem \eqref{eq:pre-split} actually enjoys the Luo-Tseng error bound property when the transport cost is induced either by the $\ell_1$-norm or $\ell_\infty$-norm~\cite{luo1993error}. However, this does not immediately imply the linear convergence of our proposed LP-ADMM, as the method involves both primal and dual updates. Thus, it is interesting to see whether our proposed LP-ADMM or some other practically efficient first-order methods can provably achieve a linear rate of convergence when applied to the $\beta$-subproblem.
%	%Is there any possibility of proposing a first-order algorithm with linear convergence rate? 	
	\bibliographystyle{plainnat}
	\bibliography{ref} 
	\newpage

%%%%%%%%%%%%%%%appendix %%%%%%%%%%%%%%%%%%%%%%%%%%%%%%%%%%%%%%%
\section*{Appendix}
\vspace{-2mm}
This supplementary document is the appendix section of the paper titled ``First-Order Algorithmic Framework for Wasserstein Distributionally Robust Logistic Regression''. It is organized as follows. In Section A, we present several technical lemmas and complete the proof of the main convergence result for the LP-ADMM. In Section B, we give the implementation details of our algorithmic framework, including pseudo-codes of the inner solvers and baseline algorithms.  In Section C, we provide additional experiment results to support our findings in this paper.
\vspace{-2mm}
\subsection*{A : Technical Proof}
Note that the objective function in problem~\eqref{eq:A} takes the form
\[ \Omega(\lambda, \beta) = \lambda\epsilon + \frac{1}{N}\sum\limits_{i=1}^N \left(h(\hat{y}_i\beta^T\hat{x}_i)+ \max\{\hat{y}_i\beta^T\hat{x}_i-\lambda\kappa,0\}\right) + \mathbb{I}_{\{\|\beta\|_\infty \leq \lambda\}}.
\]
As the function $\Omega(\cdot,\cdot)$ is jointly convex, it is natural to conclude that the function $\lambda \mapsto q(\lambda) = \inf_{\beta}\Omega(\lambda,\beta)$ is also convex (and hence unimodal) on $\mathbb{R}$. The following lemma shows that this is indeed the case.
\begin{lemma}
	Suppose that $\Omega: \mathcal{X}\times\mathcal{Y} \longrightarrow \mathbb{R} \cup \{+\infty\}$ is a convex function that is proper and bounded below. Then, the function $x\mapsto q(x) = \inf\limits_{y\in\mathcal{Y}}\Omega(x,y)$ is also convex.
\end{lemma}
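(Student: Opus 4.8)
The plan is to verify the defining inequality of convexity directly from the description of $q$ as a partial infimum, using an $\varepsilon$-approximation to cope with the fact that the infimum over $y$ need not be attained. First I would fix two points $x_1,x_2\in\mathcal{X}$ and a scalar $\theta\in[0,1]$, and reduce to the nontrivial case in which both $q(x_1)$ and $q(x_2)$ are finite: if either equals $+\infty$ the convexity inequality holds vacuously, while the hypothesis that $\Omega$ is bounded below guarantees that neither value can be $-\infty$. This last observation is what certifies that $q$ is genuinely a map into $\mathbb{R}\cup\{+\infty\}$ and that the sums appearing below are well defined.

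Next, for an arbitrary $\varepsilon>0$ I would invoke the definition of the infimum to pick $y_1,y_2\in\mathcal{Y}$ with $\Omega(x_1,y_1)\le q(x_1)+\varepsilon$ and $\Omega(x_2,y_2)\le q(x_2)+\varepsilon$. Writing $y_\theta=\theta y_1+(1-\theta)y_2$, the joint convexity of $\Omega$ on the convex product domain $\mathcal{X}\times\mathcal{Y}$ yields
\[
\Omega\bigl(\theta x_1+(1-\theta)x_2,\,y_\theta\bigr)\le \theta\,\Omega(x_1,y_1)+(1-\theta)\,\Omega(x_2,y_2).
\]
Since $q\bigl(\theta x_1+(1-\theta)x_2\bigr)$ is, by definition, no larger than the value of $\Omega$ at the particular admissible competitor $y_\theta$, chaining this with the two near-optimality bounds gives
\[
q\bigl(\theta x_1+(1-\theta)x_2\bigr)\le \theta\,q(x_1)+(1-\theta)\,q(x_2)+\varepsilon.
\]
Letting $\varepsilon\downarrow 0$ then delivers the convexity inequality for $q$.

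I do not anticipate a substantive obstacle here, as the fact that partial minimization preserves convexity is classical; the only care required is bookkeeping. Specifically, I must use that $\mathcal{Y}$ is convex so that $y_\theta$ is an admissible point in the infimum defining $q\bigl(\theta x_1+(1-\theta)x_2\bigr)$, and I must keep the $+\infty$ and finite cases cleanly separated so that every extended-real sum is unambiguous. The boundedness-below assumption is precisely what rules out the pathological $\infty-\infty$ situation, while properness is used only to ensure $q\not\equiv+\infty$; together they make the $\varepsilon$-argument above entirely routine.
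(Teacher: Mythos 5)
Your argument is correct: the $\varepsilon$-approximate-minimizer device, combined with joint convexity of $\Omega$ and the admissibility of $y_\theta=\theta y_1+(1-\theta)y_2$ in the infimum, is the standard proof that partial minimization preserves convexity, and your handling of the $+\infty$ case and of boundedness below (to exclude $q=-\infty$) is exactly the bookkeeping needed. The paper itself states this lemma without proof, treating it as classical, so there is no in-paper argument to compare against; your write-up supplies the missing proof correctly. The one point worth making explicit is the remark you already flag in passing: convexity of $\Omega$ on $\mathcal{X}\times\mathcal{Y}$ presupposes that this product set is convex, hence that $\mathcal{Y}$ is convex, which is what legitimizes taking $y_\theta$ as a competitor.
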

\vspace{-3mm}
Recall that the $\beta$-subproblem of Wasserstein DRLR with $\ell_p$-induced transport cost (where $p\in\{1,2,\infty\}$) is
\begin{equation}
\begin{aligned}
& \underset{\beta,\,s,\,\lambda}{\text{min}}
& & \lambda\epsilon + \frac{1}{N} \sum_{i=1}^N s_i \\
& \,\,\,\text{s.t.}
& & \ell_\beta(\hat{x}_i,\hat{y}_i) \leq s_i, \; i \in [N],\\
&&& \ell_\beta(\hat{x}_i,-\hat{y}_i)-\lambda\kappa\leq s_i,\; i \in [N],\\
&&& \|\beta\|_q \leq \lambda,
\end{aligned}   
\label{eq:betasub}
\end{equation}
where $q$ satisfies $\tfrac{1}{p}+\tfrac{1}{q}=1$.

\begin{proposition}
	Suppose that $(\beta^*,\lambda^*,s^*)$ is an optimal solution to~\eqref{eq:betasub}. Then, we have $\lambda^* \le \lambda^U = \tfrac{0.2785}{\epsilon}$.
\end{proposition}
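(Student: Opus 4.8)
The plan is to follow the KKT route used for Proposition~\ref{prop:kkt}, but to handle the norm constraint $\|\beta\|_q\le\lambda$ abstractly through its subdifferential, so that a single argument covers all $q\in\{1,2,\infty\}$ at once. I would introduce multipliers $a_{i1},a_{i2}\ge0$ for the two log-loss constraints and a single multiplier $\theta\ge0$ for $\|\beta\|_q-\lambda\le0$. Since~\eqref{eq:betasub} is convex and regular (MFCQ), its KKT conditions at the optimum $(\beta^*,\lambda^*,s^*)$ are necessary and sufficient; the stationarity in $\beta$ reads
\[ \sum_{i=1}^N \left( a_{i1}\nabla_\beta\ell_\beta(\hat{x}_i,\hat{y}_i) + a_{i2}\nabla_\beta\ell_\beta(\hat{x}_i,-\hat{y}_i)\right) + \theta\, g = 0, \qquad g\in\partial\|\beta^*\|_q, \]
while stationarity in $\lambda$ gives $\theta=\epsilon-\kappa\sum_i a_{i2}$ and stationarity in $s_i$ gives $a_{i1}+a_{i2}=\tfrac1N$.

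Next I would take the inner product of the $\beta$-stationarity condition with $\beta^*$. The crucial point, and the only place where the choice of norm enters, is the identity $\langle g,\beta^*\rangle=\|\beta^*\|_q$ valid for every subgradient $g\in\partial\|\beta^*\|_q$; together with complementary slackness $\theta(\|\beta^*\|_q-\lambda^*)=0$ this forces $\theta\langle g,\beta^*\rangle=\theta\lambda^*$ regardless of $q$. Writing $u_i=\hat{y}_i\beta^{*T}\hat{x}_i$, a direct computation yields $\langle\nabla_\beta\ell_\beta(\hat{x}_i,\hat{y}_i),\beta^*\rangle=-\phi(u_i)$ and $\langle\nabla_\beta\ell_\beta(\hat{x}_i,-\hat{y}_i),\beta^*\rangle=\psi(u_i)$, where $\phi(u)=\tfrac{ue^{-u}}{1+e^{-u}}$ and $\psi(u)=\tfrac{u}{1+e^{-u}}$ satisfy $\phi(u)+\psi(u)=u$. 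Substituting $\psi=u-\phi$, using $a_{i1}+a_{i2}=\tfrac1N$, and then plugging in $\theta=\epsilon-\kappa\sum_i a_{i2}$, the identity collapses to
\[ \epsilon\lambda^* = \frac{1}{N}\sum_{i=1}^N\phi(u_i) + \sum_{i=1}^N a_{i2}(\kappa\lambda^*-u_i). \]

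It then remains to show the correction term is nonpositive and to bound $\phi$. Whenever $a_{i2}>0$, complementary slackness gives $\ell_\beta(\hat{x}_i,-\hat{y}_i)-\lambda^*\kappa=s_i^*\ge\ell_\beta(\hat{x}_i,\hat{y}_i)$; since the one-line log-loss identity $\ell_\beta(\hat{x}_i,-\hat{y}_i)-\ell_\beta(\hat{x}_i,\hat{y}_i)=u_i$ holds, this yields $u_i\ge\kappa\lambda^*$, so each summand $a_{i2}(\kappa\lambda^*-u_i)\le0$ and hence $\epsilon\lambda^*\le\tfrac1N\sum_i\phi(u_i)$. Finally I would maximize the scalar map $\phi(u)=u/(1+e^{u})$ over $\mathbb{R}$: its stationarity equation $e^{u}(u-1)=1$ has a unique root $u^\star\approx1.2785$ at which $\phi(u^\star)\le0.2785$, so $\tfrac1N\sum_i\phi(u_i)\le0.2785$ and therefore $\lambda^*\le0.2785/\epsilon$.

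The only genuinely new ingredient relative to Proposition~\ref{prop:kkt} is the uniform subgradient treatment of the norm ball, which I expect to be the main (though mild) obstacle: one must verify $\langle g,\beta^*\rangle=\|\beta^*\|_q$ for the relevant subgradients and check the degenerate case $\beta^*=0$ separately (where either $\lambda^*=0$ or the norm constraint is inactive, so $\theta=0$, and the bound holds trivially). The remaining manipulations—the scalar identity $\phi+\psi=u$, the sign of the correction term, and the scalar maximization giving the constant $0.2785$—are identical to the $\ell_1$ case and present no further difficulty.
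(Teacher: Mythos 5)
Your proof is correct and follows the same backbone as the paper's: form the KKT system, pair the $\beta$-stationarity condition with $\beta^*$, use the stationarity in $\lambda$ and $s$ together with complementary slackness and the identity $\ell_\beta(\hat{x}_i,-\hat{y}_i)-\ell_\beta(\hat{x}_i,\hat{y}_i)=\hat{y}_i\beta^{*T}\hat{x}_i$ to reduce everything to $\epsilon\lambda^*\le\tfrac{1}{N}\sum_i\phi(u_i)$ plus a nonpositive correction, and finish by maximizing $t\mapsto t/(e^t+1)$ to get the constant $0.2785$. The one genuine difference is how the norm ball enters: the paper writes the $\ell_\infty$-ball as the $2n$ linear inequalities $\pm e_i^T\beta\le\lambda$, carries the multipliers $a_{i3},a_{i4}$ through the computation, and then only remarks that the $\ell_2$ and $\ell_1$ balls can be handled by modifying those highlighted pieces (the appendix proves only the $p=1$ case and asserts the others are ``similar''). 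You instead keep a single multiplier $\theta$ for $\|\beta\|_q-\lambda\le0$ and invoke the subgradient identity $\langle g,\beta^*\rangle=\|\beta^*\|_q$ for $g\in\partial\|\beta^*\|_q$, which together with $\theta(\|\beta^*\|_q-\lambda^*)=0$ gives $\theta\langle g,\beta^*\rangle=\theta\lambda^*$ for any norm; this yields one proof covering all $q\in\{1,2,\infty\}$ (indeed any norm) at once, at the mild cost of justifying the nonsmooth KKT conditions, and your handling of the degenerate case ($\theta=0$ or $\beta^*=0$, where both sides of the identity vanish) is the right check. The paper's version buys a fully elementary, smooth KKT system; yours buys the generality that the paper's remark only gestures at.
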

\begin{proof}
	We analyze the case where $p=1$ and other two cases are similar. From the KKT system in the proof of Proposition~\ref{prop:kkt}, we deduce
	\begin{equation*}
	\begin{aligned}
	0 & = \sum_{i=1}^N a_{i1} \nabla_\beta\ell_\beta(\hat{x}_i,\hat{y}_i) +a_{i2} \nabla_\beta\ell_\beta(\hat{x}_i,-\hat{y}_i)+(a_{i3}-a_{i4})e_i  \\
	& = \sum_{i=1}^N \frac{1}{N}\nabla_\beta\ell_\beta(\hat{x}_i,\hat{y}_i)+a_{i2}(\nabla_\beta\ell_\beta(\hat{x}_i,-\hat{y}_i)-\nabla_\beta\ell_\beta(\hat{x}_i,\hat{y}_i))+\sum\limits_{i=1}^N(a_{i3}-a_{i4})e_i \\
	& =  \sum\limits_{i=1}^N (\frac{1}{N}\nabla_\beta \ell_\beta(\hat{x}_i,\hat{y}_i) + a_{i2}\hat{y}_i\hat{x}_i) +\sum\limits_{i=1}^N(a_{i3}-a_{i4})e_i.  
	\end{aligned}
	\end{equation*}
	Multiplying $\beta$ gives 
	\begin{equation*}
	\begin{aligned}
	\mathbf{0} & = \sum\limits_{i=1}^N (\frac{1}{N}\beta^T\nabla_\beta \ell_\beta(\hat{x}_i,\hat{y}_i) + a_{i2}\hat{y}_i\beta^T\hat{x}_i) +\sum\limits_{i=1}^N(a_{i3}-a_{i4})e_i^T\beta \\ 
	& = \sum\limits_{i=1}^N (\frac{1}{N}\beta^T\nabla_\beta \ell_\beta(\hat{x}_i,\hat{y}_i) + a_{i2}\hat{y}_i\beta^T\hat{x}_i) +\lambda\sum\limits_{i=1}^N(a_{i3}+a_{i4})\\
	%&~(\text{i.e., since}~ a_{i3}(e_i^T\beta-\lambda)=0,a_{i4}(e_i^T\beta+\lambda)=0 ) \\ 
	& = \sum\limits_{i=1}^N (\frac{1}{N}\beta^T\nabla_\beta \ell_\beta(\hat{x}_i,\hat{y}_i) + a_{i2}\hat{y}_i\beta^T\hat{x}_i) +\lambda(\epsilon-\kappa\sum\limits_{i=1}^Na_{i2}) \\
	& = \sum\limits_{i=1}^N (\frac{1}{N}\beta^T\nabla_\beta \ell_\beta(\hat{x}_i,\hat{y}_i) + a_{i2}(\hat{y}_i\beta^T\hat{x}_i-\lambda\kappa)) +\lambda\epsilon\\
	& = \sum\limits_{i=1}^N (\frac{1}{N}\beta^T\nabla_\beta \ell_\beta(\hat{x}_i,\hat{y}_i) + a_{i2}(s_i-\ell_\beta(\hat{x}_i,\hat{y}_i))) +\lambda\epsilon   \\
	& \ge \frac{1}{N}\sum\limits_{i=1}^N\beta^T \nabla_\beta \ell_\beta(\hat{x}_i,\hat{y}_i) + \lambda\epsilon. 
	\end{aligned}
	\end{equation*}
	
	Plugging in the explicit formula for $\ell_\beta(\cdot,\cdot)$ yields
	\begin{equation}
	\lambda \leq  \frac{1}{N\epsilon}\sum\limits_{i=1}^N \frac{\hat{y}_i\beta^T\hat{x}_i \exp(-\hat{y}_i\beta^T\hat{x}_i)}{1+\exp(-\hat{y}_i\beta^T\hat{x}_i)}.
	\end{equation}
	Note that $\phi(t) = \frac{te^{-t}}{1+e^{-t}} = \frac{t}{e^t+1}$ and $\phi'(t) = \frac{e^t-te^t+1}{(e^t+1)^2}$ is strictly decreasing. Thus, $\phi(t)$ has a unique maximizer  and $\phi(t) \leq 0.2785$. Therefore, $\lambda \leq \lambda^U = \frac{0.2785}{\epsilon}$.   
\end{proof}

\subsection*{Convergence Analysis of LP-ADMM} 
%At the beginning, we fit the $\beta$-subproblem \eqref{admm-s} into its prototypical form \eqref{GP} under the 1-Wasserstein case,
%\begin{equation}
%\begin{aligned}
%& \underset{\mu,\beta}{\text{minimize}}
%& &  F(\mu,\beta)=f(\mu) + P(\mu) +g(\beta) \\
%& \text{subject to}
%& & Z\beta-\mu=0.\\
%%& & & (x,y) \in \mathcal{S} \triangleq \mathcal{B}\times \mathbb{R}^m.
%\end{aligned}
%\end{equation}
%Here,  $f(\mu) = \frac{1}{N}\sum\limits_{i=1}^N \left\{\log(1+\exp(-\mu_i)) +\frac{1}{2}(\mu_i-\lambda\kappa)\right\}$; $P(\mu) = \frac{1}{2N}\|\mu-\lambda\kappa\|_1$; $g(\beta) = \mathbb{I}_{\{\|\beta\|_\infty \leq \lambda\}}$; Note that the data matrix $A$ in \eqref{admm-s} is equal to $Z$.
\begin{lemma}
	\label{l1}
	The sequence $\{(x^{k},y^{k},z^{k})\}_{k\ge0}$ generated by the LP-ADMM satisfies
	\[ \|Ax^{k+1}-y^k\|_2^2 \ge \frac{1}{2}\|y^{k+1}-y^k\|_2^2-\frac{{L_f}^2}{\rho^2}\|y^k-y^{k-1}\|_2^2\]
	for all $k\ge1$.
\end{lemma}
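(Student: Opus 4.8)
The plan is to convert the first-order optimality condition of the $y$-update into a clean statement about $\partial P$ and then exploit the monotonicity of this subdifferential. First I would rewrite \eqref{eq:2}: expanding the bracket gives $0\in\nabla f(y^k)+\rho(y^{k+1}-Ax^{k+1})+w^k+\partial P(y^{k+1})$, and since the dual update can be written as $w^{k+1}=w^k+\rho(y^{k+1}-Ax^{k+1})$, the middle terms collapse to $w^{k+1}$. Hence
\[
-\nabla f(y^k)-w^{k+1}\in\partial P(y^{k+1}).
\]
Writing the same inclusion one step earlier (which is legitimate precisely because $k\ge1$) yields $-\nabla f(y^{k-1})-w^{k}\in\partial P(y^{k})$.

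Next I would apply the monotonicity of $\partial P$ (valid since $P$ is convex) to these two inclusions at the points $y^{k+1}$ and $y^k$. After substituting $w^k-w^{k+1}=\rho(Ax^{k+1}-y^{k+1})$ to eliminate the multipliers, the difference of the two subgradients reduces to $(\nabla f(y^{k-1})-\nabla f(y^k))+\rho(Ax^{k+1}-y^{k+1})$, so monotonicity gives
\[
\rho\langle Ax^{k+1}-y^{k+1},\,y^{k+1}-y^k\rangle\ \ge\ \langle \nabla f(y^k)-\nabla f(y^{k-1}),\,y^{k+1}-y^k\rangle.
\]
I would then bound the right-hand side from below by Cauchy--Schwarz together with Assumption~\ref{assp:2}, so that the inner product is at least $-L_f\|y^k-y^{k-1}\|_2\,\|y^{k+1}-y^k\|_2$.

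To finish, set $a=Ax^{k+1}-y^k$ and $b=y^{k+1}-y^k$, so that $Ax^{k+1}-y^{k+1}=a-b$ and the previous display becomes $\rho\langle a,b\rangle\ge\rho\|b\|_2^2-L_f\|y^k-y^{k-1}\|_2\,\|b\|_2$. Applying Cauchy--Schwarz in the form $\langle a,b\rangle\le\|a\|_2\|b\|_2$ and dividing by $\|b\|_2$ (the case $b=0$ is trivial, since the target's right-hand side is then non-positive) reduces everything to the scalar inequality $\|a\|_2\ge\|b\|_2-D$, where $D=\tfrac{L_f}{\rho}\|y^k-y^{k-1}\|_2$.

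The one delicate point, which I expect to be the only non-routine step, is passing from this linear bound to the quadratic target $\|a\|_2^2\ge\tfrac12\|b\|_2^2-D^2$. If $\|b\|_2<D$ the target's right-hand side is strictly negative and the claim is immediate; if $\|b\|_2\ge D$, then $\|b\|_2-D\ge0$ so squaring the linear bound is valid, and subtracting the target yields
\[
\|a\|_2^2-\Big(\tfrac12\|b\|_2^2-D^2\Big)\ \ge\ (\|b\|_2-D)^2-\tfrac12\|b\|_2^2+D^2\ =\ \tfrac12\big(\|b\|_2-2D\big)^2\ \ge\ 0,
\]
which completes the proof after recalling that $D^2=\tfrac{L_f^2}{\rho^2}\|y^k-y^{k-1}\|_2^2$.
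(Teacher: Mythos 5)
Your proof is correct, but it reaches the conclusion by a different (if closely related) route from the paper's. The paper packages the $y$-update as $y^{k+1}=\prox_{P/\rho}\bigl(Ax^{k+1}-\tfrac{w^k+\nabla f(y^k)}{\rho}\bigr)$, applies non-expansiveness of the proximal map to two consecutive iterates, uses the dual update to collapse $Ax^{k+1}-Ax^k+\tfrac{w^{k-1}-w^k}{\rho}$ into $Ax^{k+1}-y^k$, and then finishes in one line with the vector inequality $\|u+v\|_2^2\le 2\|u\|_2^2+2\|v\|_2^2$. You instead extract the subgradient inclusions $-\nabla f(y^k)-w^{k+1}\in\partial P(y^{k+1})$ and $-\nabla f(y^{k-1})-w^{k}\in\partial P(y^{k})$ and invoke monotonicity of $\partial P$; this is exactly the mechanism behind (firm) non-expansiveness of the prox, so the two arguments share the same engine, but you then pass to scalars via Cauchy--Schwarz and must recover the quadratic bound from the linear one $\|a\|_2\ge\|b\|_2-D$ by a case analysis, whose key identity $\|a\|_2^2-\bigl(\tfrac12\|b\|_2^2-D^2\bigr)\ge\tfrac12(\|b\|_2-2D)^2$ I checked and is correct (as is the trivial disposal of the cases $b=0$ and $\|b\|_2<D$). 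What your version buys is self-containedness --- it never mentions the proximal operator and works directly from the optimality conditions \eqref{eq:1}--\eqref{eq:2} already displayed in the paper; what it costs is the extra scalar bookkeeping at the end, which the paper avoids by keeping the vector structure until the final two-term squaring. Both proofs use $k\ge1$ in the same place (to write the optimality condition, equivalently the prox identity, at step $k$), and both use only convexity of $P$, the dual update, and the Lipschitz bound of Assumption~\ref{assp:2}.
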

\begin{proof}
	Note that the $y$-update rule \eqref{eq:yupdate} takes the form
	\begin{equation}
	y^{k+1} = \arg\min_y\left\{\frac{\rho}{2}\|y-(Ax^{k+1}+\frac{w^k+\nabla f(y^k)}{\rho})\|_2^2+P(y)\right\},
	\end{equation}
	which can be further rewritten as 
	\begin{equation}
	y^{k+1} = \prox_{\frac{1}{\rho}g(.)}(Ax^{k+1}-\frac{w^k+\nabla f(y^k)}{\rho}).
	\label{proximal}
	\end{equation}
	Based on the expansiveness property of proximal operator and the $y$-update rule \eqref{proximal}, we have 
	\begin{align*}
	\|y^{k+1}-y^k\|_2^2 & = \|\prox_{\frac{1}{\rho}g(.)}(Ax^{k+1}-\frac{w^k+\nabla f(y^k)}{\rho})-\prox_{\frac{1}{\rho}g(.)}(Ax^{k}-\frac{w^{k-1}+\nabla f(y^{k-1})}{\rho})\|_2^2 \\
	& \leq \|Ax^{k+1}-Ax^k + \frac{w^{k-1}-w^k}{\rho} + \frac{\nabla f(y^k)-\nabla f(y^{k-1})}{\rho}\|_2^2 \\
	& \stackrel{(\varheart)}{=}  \|Ax^{k+1}-y^k + \frac{1}{\rho}(\nabla f(y^k)-\nabla f(y^{k-1})\|_2^2\\
	& \leq 2(\|Ax^{k+1}-y^k\|_2^2 + \frac{{L_f}^2}{\rho^2}\|y^k-y^{k-1}\|_2^2),
	\end{align*}
	where $(\varheart)$ is derived from $w^{k+1} = w^k - \rho(Ax^{k+1}-y^{k+1})$. Rearranging the inequality, we can conclude the proof.
\end{proof}
\begin{lemma}
	Let $\{(x^{k},y^{k},w^{k})\}_{k\ge0}$ be the sequence generated by the LP-ADMM. Set $c = \frac{\rho-2L_f}{4}$. Then, for any $x,y$ satisfying $Ax-y=0$, we have 
	\begin{align}
	& F(x^{k+1},y^{k+1})-F(x,y) \leq \frac{1}{2\rho}(\|w^{k}\|_2^2-\|w^{k+1}\|_2^2) \notag\\
	& + \frac{\rho}{2}(\|y^k-y\|_2^2-\|y^{k+1}-y\|_2^2)
	+c(\|y^k-y^{k-1}\|_2^2-\|y^{k+1}-y^{k}\|_2^2) +(B_f(y,y^{k+1})-B_f(y,y^{k})).\notag
	\end{align}
\end{lemma}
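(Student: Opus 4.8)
The plan is to convert the two subproblem optimality conditions \eqref{eq:1}--\eqref{eq:2} into subgradient inequalities for $g$ and $P$, to treat the smooth term $f$ by convexity combined with the three-point identity for the Bregman divergence, and then to reorganize everything into telescoping squared norms using the dual update $w^{k+1}=w^k-\rho(Ax^{k+1}-y^{k+1})$ and Lemma~\ref{l1}. Throughout I work under the standing condition $\rho>(\sqrt{3}+1)L_f$ of Proposition~\ref{prop1}, which will be used only in the very last step.

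First I would simplify the optimality conditions. Substituting $Ax^{k+1}-y^{k+1}=(w^k-w^{k+1})/\rho$ into \eqref{eq:1} gives $A^Tw^{k+1}-\rho A^T(y^{k+1}-y^k)\in\partial g(x^{k+1})$, and into \eqref{eq:2} gives $-\nabla f(y^k)-w^{k+1}\in\partial P(y^{k+1})$. Convexity of $g$ and $P$ then yields, for any feasible $(x,y)$ with $Ax=y$,
\[ g(x^{k+1})-g(x)\le\langle w^{k+1}-\rho(y^{k+1}-y^k),\,Ax^{k+1}-Ax\rangle,\qquad P(y^{k+1})-P(y)\le-\langle \nabla f(y^k)+w^{k+1},\,y^{k+1}-y\rangle. \]
For the smooth part I would use convexity of $f$ at $y^{k+1}$, namely $f(y^{k+1})-f(y)\le\langle\nabla f(y^{k+1}),y^{k+1}-y\rangle$, and then rewrite $\nabla f(y^{k+1})$ in terms of $\nabla f(y^k)$ through the Bregman three-point identity. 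This produces exactly $\langle\nabla f(y^k),y^{k+1}-y\rangle+B_f(y,y^{k+1})+B_f(y^{k+1},y^k)-B_f(y,y^k)$, and is the step that introduces the Bregman terms $B_f(y,y^{k+1})-B_f(y,y^k)$ appearing in the claim.

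Adding the three inequalities, the $\langle\nabla f(y^k),\cdot\rangle$ contributions cancel, and using $Ax=y$ the multiplier inner products collapse to $\langle w^{k+1},Ax^{k+1}-y^{k+1}\rangle=\tfrac{1}{\rho}\langle w^{k+1},w^k-w^{k+1}\rangle$. I would then apply the polarization identities $\langle a,b-a\rangle=\tfrac12(\|b\|_2^2-\|a\|_2^2-\|b-a\|_2^2)$ and $\langle a-b,a-c\rangle=\tfrac12(\|a-b\|_2^2+\|a-c\|_2^2-\|b-c\|_2^2)$ to turn these into the telescoping terms $\tfrac{1}{2\rho}(\|w^k\|_2^2-\|w^{k+1}\|_2^2)$ and $\tfrac{\rho}{2}(\|y^k-y\|_2^2-\|y^{k+1}-y\|_2^2)$. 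The remaining cross term $-\rho\langle y^{k+1}-y^k,\,Ax^{k+1}-y^{k+1}\rangle$ together with the leftover $-\tfrac{\rho}{2}\|Ax^{k+1}-y^{k+1}\|_2^2$ completes the square into $\tfrac{\rho}{2}\|y^{k+1}-y^k\|_2^2-\tfrac{\rho}{2}\|Ax^{k+1}-y^k\|_2^2$, and the spurious $+\tfrac{\rho}{2}\|y^{k+1}-y^k\|_2^2$ cancels the $-\tfrac{\rho}{2}\|y^{k+1}-y^k\|_2^2$ coming from the polarization above.

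At this point the only leftover is $-\tfrac{\rho}{2}\|Ax^{k+1}-y^k\|_2^2+B_f(y^{k+1},y^k)$, and this is the step I expect to be the crux. Here I would bound $B_f(y^{k+1},y^k)\le\tfrac{L_f}{2}\|y^{k+1}-y^k\|_2^2$ via Assumption~\ref{assp:2} and lower-bound $\|Ax^{k+1}-y^k\|_2^2$ by Lemma~\ref{l1}, obtaining the upper bound $(\tfrac{L_f}{2}-\tfrac{\rho}{4})\|y^{k+1}-y^k\|_2^2+\tfrac{L_f^2}{2\rho}\|y^k-y^{k-1}\|_2^2$. Since $\tfrac{L_f}{2}-\tfrac{\rho}{4}=-c$ and the threshold $\rho>(\sqrt{3}+1)L_f$ is exactly equivalent to $\rho^2-2\rho L_f-2L_f^2\ge0$, i.e.\ to $\tfrac{L_f^2}{2\rho}\le c$, this leftover is at most $c(\|y^k-y^{k-1}\|_2^2-\|y^{k+1}-y^k\|_2^2)$, which completes the proof. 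The main obstacle is precisely matching the cross term to Lemma~\ref{l1} and verifying that the assumed lower bound on $\rho$ forces $\tfrac{L_f^2}{2\rho}\le c$; this is where the constant $c=(\rho-2L_f)/4$ is pinned down.
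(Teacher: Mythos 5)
Your proposal is correct and follows essentially the same route as the paper's proof: subgradient inequalities from the two optimality conditions rewritten via the dual update, the three-point Bregman identity to produce $B_f(y,y^{k+1})-B_f(y,y^k)$ plus $B_f(y^{k+1},y^k)\le\tfrac{L_f}{2}\|y^{k+1}-y^k\|_2^2$, polarization to obtain the telescoping $w$- and $y$-terms, and Lemma~\ref{l1} combined with $\rho\ge(\sqrt{3}+1)L_f$ (equivalently $\tfrac{L_f^2}{2\rho}\le c$) to absorb the leftover into $c(\|y^k-y^{k-1}\|_2^2-\|y^{k+1}-y^k\|_2^2)$. The only cosmetic difference is that you bound $f$ and $P$ separately while the paper treats $f+P$ jointly; the algebra is otherwise identical.
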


\begin{proof}
	Using the optimality conditions \eqref{eq:1} and \eqref{eq:2}, we have
	\begin{align}
	&-\rho A^T(Ax^{k+1}-y^k-\frac{w^k}{\rho}) \in  \partial g(x^{k+1}),\\ 
	&\nabla f(y^{k+1})-\nabla f(y^{k})-w^{k+1} \in \nabla f(y^{k+1})+ \partial  P(y^{k+1}), \label{eq:yopt}
	\end{align}
	where \eqref{eq:yopt} is derived from $w^{k+1} = w^k - \rho(Ax^{k+1}-y^{k+1})$.
	By the convexity of $g(\cdot)$,
	\begin{equation*}
	\begin{aligned}
	g(x^{k+1})-g(x) & \leq \langle \partial g(x^{k+1}), x^{k+1}-x \rangle\\
	& = -\langle \rho A^T(Ax^{k+1}-y^k-\frac{w^k}{\rho}),x^{k+1}-x \rangle\\
	& = - \rho \langle Ax^{k+1}-y^{k+1}+y^{k+1}-y^k-\frac{w^k}{\rho},A(x^{k+1}-x) \rangle \\
	& =  - \rho \langle \frac{1}{\rho}(w^k-w^{k+1})+y^{k+1}-y^k-\frac{w^k}{\rho},A(x^{k+1}-x)  \rangle \\
	&= \langle w^{k+1}-\rho(y^{k+1}-y^k),Ax^{k+1}-y  \rangle \text{\quad (i.e.,  Dual Update and Dual Feasibility)}\\
	& = \langle w^{k+1},Ax^{k+1}-y  \rangle + \langle \rho(y^{k}-y^{k+1}),Ax^{k+1}-y  \rangle \\ 
	& = \langle w^{k+1},Ax^{k+1}-y  \rangle + \frac{\rho}{2}(\|y^k-y\|_2^2-\|y^{k+1}-y\|_2^2 + \|Ax^{k+1}-y^{k+1}\|_2^2\\&\quad -\|Ax^{k+1}-y^{k}\|_2^2),
	\end{aligned}
	\end{equation*}
	where the last equality holds as 
	\[(x_1-x_2)^T(x_3+x_4) = \frac{1}{2}(\|x_4-x_2\|_2^2-\|x_4-x_1\|_2^2+\|x_3+x_1\|_2^2-\|x_3-x_2\|_2^2).\]
	Similarly, by the convexity of the function $f(\cdot)+P(\cdot)$, we have
	\begin{equation*}
	\begin{aligned}
	f(y^{k+1})+P(y^{k+1})-f(y)-P(y) & \leq \langle\nabla f(y^{k+1})+ \partial  P(y^{k+1}), y^{k+1}-y \rangle \\
	& = \langle\nabla f(y^{k+1})-\nabla f(y^{k})-w^{k+1}, y^{k+1}-y \rangle \\
	& = \langle\nabla f(y^{k+1})-\nabla f(y^{k}), y^{k+1}-y \rangle -\langle w^{k+1},y^{k+1}-y \rangle.
	\end{aligned}
	\end{equation*}
	Summing the above two inequalities, we have
	\begin{equation*}
	\label{x1}
	\begin{aligned}
	F(x^{k+1},y^{k+1})-F(x,y)& \leq \langle w^{k+1},Ax^{k+1}-y^{k+1}  \rangle  +\frac{\rho}{2}\|Ax^{k+1}-y^{k+1}\|_2^2\\
	& +\frac{\rho}{2}(\|y^k-y\|_2^2-\|y^{k+1}-y\|_2^2) \\
	& +\underbrace{\langle\nabla f(y^{k+1})-\nabla f(y^{k}), y^{k+1}-y \rangle -\frac{\rho}{2}\|Ax^{k+1}-y^{k}\|_2^2}_{(*)}.
	\end{aligned}
	\end{equation*}
	Note that the term $\langle w^{k+1},Ax^{k+1}-y^{k+1}  \rangle  +\frac{\rho}{2}\|Ax^{k+1}-y^{k+1}\|_2^2$ can be reformulated as $\frac{1}{2\rho}(\|w^k\|_2^2-\|w^{k+1}\|_2^2)$ via plugging in the dual update rule. In details, 
	\begin{equation*}
	\begin{aligned}
	\langle w^{k+1},Ax^{k+1}-y^{k+1}  \rangle  +\frac{\rho}{2}\|Ax^{k+1}-y^{k+1}\|_2^2 & = \langle Ax^{k+1}-y^{k+1}, w^{k+1}+\frac{\rho}{2}(Ax^{k+1}-y^{k+1})  \rangle\\
	& =\langle\frac{w^{k}-w^{k+1}}{\rho}, \frac{1}{2}(w^{k}+w^{k+1})\rangle\\
	& = \frac{1}{2\rho}(\|w^k\|_2^2-\|w^{k+1}\|_2^2). 
	\end{aligned}
	\end{equation*}
	
	Besides, it is more tricky to give an upper bound on the last term in the above gap function. Based on the three-point property of Bregman divergence, we have
	\begin{equation}
	\label{term1}
	\begin{aligned}
	\langle\nabla f(y^{k+1})-\nabla f(y^{k}), y^{k+1}-y \rangle & = B_f(y^{k+1},y^k)+B_f(y,y^{k+1})-B_f(y,y^k)\\
	& \leq \frac{L_f}{2}\|y^{k+1}-y^k\|_2^2+(B_f(y,y^{k+1})-B_f(y,y^k)).
	\end{aligned}
	\end{equation}
	Applying Lemma \ref{l1} to the term $-\frac{\rho}{2}\|Ax^{k+1}-y^{k}\|_2^2$, we have
	\begin{equation}
	\label{term2}
	-\frac{\rho}{2}\|Ax^{k+1}-y^{k}\|_2^2 \leq \frac{{L_f}^2}{2\rho}\|y^k-y^{k-1}\|_2^2-\frac{\rho}{4}\|y^{k+1}-y^k\|_2^2.
	\end{equation}
	Adding \eqref{term1} and \eqref{term2}, we can obtain 
	\begin{equation}
	(*) \leq \frac{{L_f}^2}{2\rho}\|y^k-y^{k-1}\|_2^2-(\frac{\rho}{4}-\frac{L_f}{2})\|y^{k+1}-y^k\|_2^2+(B_f(y,y^{k+1})-B_f(y,y^k)).
	\end{equation}
	If the penalty parameter satisfies the condition $\rho \ge (\sqrt{3}+1)L_f$, then
	\begin{equation}
	(*) \leq c(\|y^k-y^{k-1}\|_2^2-\|y^{k+1}-y^k\|_2^2)+(B_f(y,y^{k+1})-B_f(y,y^k)).
	\end{equation}
\end{proof}

\begin{proposition}
	\label{propA}
	Suppose that we use a constant penalty parameter $\rho$ that satisfies $\rho > (\sqrt{3}+1)L_f$. Let $\{(x^{k},y^{k},w^{k})\}_{k\ge0}$ be the sequence generated by the LP-ADMM and $(x^{*},y^{*},w^{*})$ be a point satisfying the KKT conditions \eqref{KKT} with $x^*\in\mathcal{X},y^*\in\mathcal{Y}$. Then, the following hold:
	%	Suppose that $\{x^{k+1},y^{k+1},w^{k+1}\}$ is generated by the LP-ADMM algorithm and $\{x^{*},y^{*},w^{*}\}$ satisfy the KKT conditions \eqref{GP} with $x^*\in\mathcal{X},y^*\in\mathcal{Y}$.  Set the penalty parameter $\rho > (\sqrt{3}+1)L_f$ and $c = \frac{\rho-2L_f}{4}$. Then
	\begin{enumerate}[(a)] 
		\item For all $k\ge1$, $\|Ax^{k+1}-y^k\|_2^2 \ge \frac{1}{2}\|y^{k+1}-y^k\|_2^2-\frac{{L_f}^2}{\rho^2}\|y^k-y^{k-1}\|_2^2$.
		\item For all $k\ge0$ and $(x,y)$ satisfying $Ax-y=0$, we have $F(x^{k+1},y^{k+1})-F(x,y) \leq \frac{1}{2\rho}(\|w^{k}\|_2^2-\|w^{k+1}\|_2^2) + \frac{\rho}{2}(\|y^k-y\|_2^2-\|y^{k+1}-y\|_2^2)
		+c(\|y^k-y^{k-1}\|_2^2-\|y^{k+1}-y^{k}\|_2^2) +(B_f(y,y^{k+1})-B_f(y,y^{k}))$, where $c = \frac{\rho-2L_f}{4}$.
		\item The sequence $\{\frac{1}{2\rho}\|w^{k}-w^*\|_2^2+\frac{\rho}{2}\|y^{k}-y^*\|_2^2-B_f(y^*,y^{k})\}_{k\ge0}$ is non-increasing and bounded below. 
	\end{enumerate}
\end{proposition}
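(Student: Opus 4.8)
Parts (a) and (b) coincide verbatim with Lemma~\ref{l1} and the preceding lemma, so they require no further argument; the real work is in part (c). The plan is to convert the one-step estimate of (b) into a Lyapunov recursion centred at a KKT triple $(x^*,y^*,w^*)$.

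First I would specialise part (b) to the feasible pair $(x,y)=(x^*,y^*)$; this is admissible because the KKT system~\eqref{KKT} guarantees $Ax^*-y^*=0$. The result bounds the primal gap $F(x^{k+1},y^{k+1})-F(x^*,y^*)$ by the telescoping dual differences $\tfrac{1}{2\rho}(\|w^k\|_2^2-\|w^{k+1}\|_2^2)$, the telescoping distances $\tfrac{\rho}{2}(\|y^k-y^*\|_2^2-\|y^{k+1}-y^*\|_2^2)$, the $c$-weighted successive-iterate differences, and the Bregman differences $B_f(y^*,y^{k+1})-B_f(y^*,y^k)$.

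The key step is to recentre the dual telescoping term at $w^*$. Writing $\|w^k\|_2^2-\|w^{k+1}\|_2^2=\|w^k-w^*\|_2^2-\|w^{k+1}-w^*\|_2^2+2\langle w^*,w^k-w^{k+1}\rangle$ and invoking the dual update $w^k-w^{k+1}=\rho(Ax^{k+1}-y^{k+1})$ turns the cross term into $\langle w^*,Ax^{k+1}-y^{k+1}\rangle$. I would then bound this cross term above by the primal gap itself: the subgradient inequality for $g$ at $x^*$ (using $A^Tw^*\in\partial g(x^*)$) gives $g(x^{k+1})-g(x^*)\ge\langle w^*,Ax^{k+1}-y^*\rangle$ after substituting $Ax^*=y^*$, while the subgradient inequality for $f+P$ at $y^*$ (using $-w^*\in\nabla f(y^*)+\partial P(y^*)$) gives $(f+P)(y^{k+1})-(f+P)(y^*)\ge-\langle w^*,y^{k+1}-y^*\rangle$; adding the two yields $\langle w^*,Ax^{k+1}-y^{k+1}\rangle\le F(x^{k+1},y^{k+1})-F(x^*,y^*)$. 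Substituting this back lets the two copies of the primal gap cancel, leaving a purely quadratic-plus-Bregman inequality.

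After rearranging, the inequality reads $\Phi_{k+1}\le\Phi_k$ for the Lyapunov function $\Phi_k=\tfrac{1}{2\rho}\|w^k-w^*\|_2^2+\tfrac{\rho}{2}\|y^k-y^*\|_2^2-B_f(y^*,y^k)+c\|y^k-y^{k-1}\|_2^2$, that is, exactly the claimed quantity augmented by the nonnegative term $c\|y^k-y^{k-1}\|_2^2$ (here $c>0$ since $\rho>(\sqrt3+1)L_f>2L_f$), which delivers the monotonicity. For the lower bound I would use $L_f$-smoothness to estimate $B_f(y^*,y^k)\le\tfrac{L_f}{2}\|y^k-y^*\|_2^2$, whence $\Phi_k\ge\tfrac{1}{2\rho}\|w^k-w^*\|_2^2+\tfrac{\rho-L_f}{2}\|y^k-y^*\|_2^2\ge0$ because $\rho>L_f$. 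I expect the recentring move to be the main obstacle: the estimate in (b) telescopes $\|w^k\|_2^2$ rather than $\|w^k-w^*\|_2^2$, and the switch forces the cross term $\langle w^*,Ax^{k+1}-y^{k+1}\rangle$ to appear; controlling it by the primal gap through the KKT subgradient inequalities, and checking that the cancellation of the gap is exact, is the crux, while the remaining manipulations are routine completing-the-square bookkeeping.
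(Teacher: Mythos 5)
Your argument for (c) is essentially the paper's: both use the KKT/convexity inequality $\langle w^*, Ax^{k+1}-y^{k+1}\rangle \le F(x^{k+1},y^{k+1})-F(x^*,y^*)$ to cancel the primal gap, recentre the dual telescoping term at $w^*$, and deduce monotonicity of an augmented Lyapunov sequence (the paper restarts from its intermediate estimate and augments $m_k$ by $\tfrac{L_f^2}{2\rho}\|y^k-y^{k-1}\|_2^2$, whereas you start directly from part (b) and augment by $c\|y^k-y^{k-1}\|_2^2$ --- algebraically equivalent routes). The only caveat, which you share with the paper, is that what is literally shown to be non-increasing is the augmented sequence $\Phi_k$ rather than the bare quantity stated in (c); your explicit lower-bound step via $B_f(y^*,y^k)\le\tfrac{L_f}{2}\|y^k-y^*\|_2^2$ is a detail the paper leaves implicit and is correct.
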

\begin{proof}
	Parts (a) and (b) have been already proven in Lemmas 6.2 and 6.3. 
	By the convexity of $F(\cdot,\cdot)$,
	\begin{equation}
	\label{1}
	F(x^*,y^*) -F(x^{k+1},y^{k+1}) \leq -\langle w^*, Ax^{k+1}-y^{k+1}\rangle.
	\end{equation}
	Subsequently, combining \eqref{x1} and \eqref{term1},
	\begin{equation}
	\label{2}
	\begin{aligned}
	F(x^{k+1},y^{k+1})-F(x^*,y^*) & \leq\langle w^{k+1} ,Ax^{k+1}-y^{k+1}\rangle +\frac{\rho}{2}\|Ax^{k+1}-y^{k+1}\|_2^2\\
	&+ \frac{\rho}{2}(\|y^k-y^*\|_2^2-\|y^{k+1}-y^*\|_2^2)-(B_f(y^*,y^k)-B_f(y^*,y^{k+1}))\\
	& + \frac{L_f}{2}\|y^{k+1}-y^{k}\|_2^2-\frac{\rho}{2}\|Ax^{k+1}-y^k\|_2^2.
	\end{aligned}
	\end{equation}
	Summing up the terms \eqref{1} and \eqref{2}, we have
	\begin{equation}
	\begin{aligned}
	0 & \leq \frac{1}{\rho}\langle w^{k+1}-w^*,w^k-w^{k+1}\rangle + \frac{\rho}{2}\|Ax^{k+1}-y^{k+1}\|_2^2 + \frac{\rho}{2}(\|y^k-y^*\|_2^2-\|y^{k+1}-y^*\|_2^2)\\
	&-(B_f(y^*,y^k)-B_f(y^*,y^{k+1}))+ \frac{L_f}{2}\|y^{k+1}-y^{k}\|_2^2-\frac{\rho}{2}\|Ax^{k+1}-y^k\|_2^2. 
	\end{aligned}
	\end{equation}
	Note that $m_{k+1} = \frac{1}{2\rho}\|w^{k+1}-w^*\|_2^2 +\frac{\rho}{2}\|y^{k+1}-y^*\|_2^2-B_f(y^*,y^{k+1})$. Based on $(x_1-x_2)^T(x_1-x_3) = \frac{1}{2}(\|x_1-x_2\|_2^2+\|x_1-x_3\|_2^2-\|x_2-x_3\|_2^2)$, we have, 
	\begin{equation}
	0 \leq m_{k}-m_{k+1} + \frac{L_f}{2}\|y^{k+1}-y^{k}\|_2^2-\frac{\rho}{2}\|Ax^{k+1}-y^k\|_2^2. 
	\end{equation}
	On top of Lemma \ref{l1},
	\begin{equation}
	(-\frac{L_f^2}{2\rho} + \frac{\rho}{4} -\frac{L_f}{2})\|y^{k+1}-y^k\|_2^2 \leq m_{k}-m_{k+1} +\frac{L_f^2}{2\rho}(\|y^k-y^{k-1}\|_2^2-\|y^{k+1}-y^{k}\|_2^2).
	\end{equation}
	We construct a new sequence $m_{k+1}'= m_{k+1} +\frac{L_f^2}{2\rho} \|y^{k+1}-y^{k}\|_2^2$. Since the term $-\frac{L_f^2}{2\rho} + \frac{\rho}{4} -\frac{L_f}{2}$ is positive if $\rho > (\sqrt{3}+1)L_f$, it is easy to conclude that the sequence $m_{k+1}'$ is non-increasing and bounded below. Moreover, we have 
	\[\sum\limits_{k=0}^\infty \|y^{k+1}-y^k\|_2^2  < +\infty.\]
	Thus, we conclude that the sequence $\{m_k\}$ is non-increasing, bounded below and  $\|y^{k+1}-y^k\| \xrightarrow{} 0$. Due to the dual update rule, we automatically have $\|x^{k+1}-x^k\| \xrightarrow{} 0$ and $\|w^{k+1}-w^k\| \xrightarrow{} 0$.
\end{proof}

\begin{theorem}
	Consider the setting of Proposition~\ref{propA}. Set $\bar{x}^K =\frac{1}{K}\sum_{k=1}^Kx^k$ and $\bar{y}^K =\frac{1}{K}\sum_{k=1}^Ky^k$. Then, the following hold:
	%  for any $x^*\in\mathcal{X},y^*\in\mathcal{Y}$ satisfying the KKT conditions \eqref{GP}, we have
	\begin{enumerate}[(a)] 
		\item The sequence $\{(x^{k},y^{k},w^{k})\}_{k\ge0}$ converges to a KKT point of problem \eqref{GP}. 
		\item The sequence of function values converges at the rate $\mathcal{O}(\frac{1}{K})$:
		\[F(\bar{x}^K,\bar{y}^K)-F(x^*,y^*) \leq\frac{ (1/2\rho)\|w^0\|_2^2+ (\rho/2)\|y^*-y^0\|_2^2+c\|y^0-y^1\|_2^2}{K}=\mathcal{O}(\frac{1}{K}).\]
	\end{enumerate}
\end{theorem}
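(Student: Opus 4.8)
The plan is to obtain part (b) by \emph{telescoping} the one-step estimate in Proposition~\ref{propA}(b) evaluated at the comparison point $(x^*,y^*)$, and to obtain part (a) from boundedness of the iterates together with a monotone-potential (Lyapunov) argument built on Proposition~\ref{propA}(c).

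For part (b), I would set $(x,y)=(x^*,y^*)$ in Proposition~\ref{propA}(b); this is admissible since $Ax^*-y^*=0$. Introducing the potential
\[ \Phi_k = \tfrac{1}{2\rho}\|w^k\|_2^2 + \tfrac{\rho}{2}\|y^k-y^*\|_2^2 + c\,\|y^k-y^{k-1}\|_2^2 - B_f(y^*,y^k), \]
the estimate reads $F(x^{k+1},y^{k+1})-F(x^*,y^*)\le \Phi_k-\Phi_{k+1}$, so summing over the iterations makes the right-hand side collapse to $\Phi_0-\Phi_K$, whose initial boundary term is exactly the numerator in the statement. The one genuinely delicate point is that this numerator must not retain a Bregman contribution: I would control the leftover term $-B_f(y^*,y^K)$ via Assumption~\ref{assp:2}, i.e. $B_f(y^*,y^K)\le \tfrac{L_f}{2}\|y^K-y^*\|_2^2$, and absorb it into the negative quadratic $\tfrac{\rho}{2}\|y^K-y^*\|_2^2$; since $\rho>(\sqrt{3}+1)L_f>L_f$ (so also $c>0$), this yields $\Phi_K\ge0$, and $\Phi_K$ can be dropped. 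Dividing by $K$ and invoking Jensen's inequality for the convex $F$ — the average $(\bar x^K,\bar y^K)$ is feasible for $g$ because the norm ball is convex and each $x^k$ produced by the $x$-update lies in it — gives $F(\bar x^K,\bar y^K)-F(x^*,y^*)\le\tfrac1K\sum_{k}\big(F(x^{k},y^{k})-F(x^*,y^*)\big)=\mathcal{O}(1/K)$.

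For part (a), I would first use Proposition~\ref{propA}(c): the shifted potential is non-increasing and bounded below, and combined with $B_f(y^*,y^k)\le\tfrac{L_f}{2}\|y^k-y^*\|_2^2$ it dominates $\tfrac{1}{2\rho}\|w^k-w^*\|_2^2+\tfrac{\rho-L_f}{2}\|y^k-y^*\|_2^2$, so $\{y^k\}$ and $\{w^k\}$ are bounded; $\{x^k\}$ is bounded automatically since $g$ is the indicator of a compact norm ball. By Bolzano–Weierstrass there is a subsequence $(x^{k_j},y^{k_j},w^{k_j})\to(x^\infty,y^\infty,w^\infty)$. Proposition~\ref{propA}(c) also supplies $\|y^{k+1}-y^k\|\to0$, $\|x^{k+1}-x^k\|\to0$, and $\|w^{k+1}-w^k\|\to0$; passing to the limit along $k_j$ in the subproblem optimality conditions \eqref{eq:1}–\eqref{eq:2} (using closedness of $\partial g$ and $\partial P$ and continuity of $\nabla f$) and in the dual update $w^{k+1}=w^k-\rho(Ax^{k+1}-y^{k+1})$ (which forces $Ax^\infty-y^\infty=0$) shows that $(x^\infty,y^\infty,w^\infty)$ satisfies the KKT system~\eqref{KKT}.

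The final step, which I expect to be the main obstacle, is upgrading subsequential convergence to convergence of the \emph{entire} sequence. Here I would re-center the Lyapunov function at the specific KKT limit $(x^\infty,y^\infty,w^\infty)$ — legitimate because Proposition~\ref{propA}(c) holds for \emph{any} KKT point — so that the corresponding $m_k'$ is non-increasing; since $m_k'$ admits a subsequence tending to $0$ (along $k_j$, as $(x^{k_j},y^{k_j},w^{k_j})\to(x^\infty,y^\infty,w^\infty)$ and $\|y^{k+1}-y^k\|\to0$), monotonicity forces $m_k'\to0$, and hence the whole sequence converges to $(x^\infty,y^\infty,w^\infty)$. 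As in (b), the recurring technical care lies in handling the Bregman terms by bounding $B_f$ through the smoothness constant and exploiting $\rho>(\sqrt{3}+1)L_f$ throughout.
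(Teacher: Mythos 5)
Your proposal matches the paper's proof essentially step for step: part (b) is obtained by telescoping Proposition~\ref{propA}(b) at $(x^*,y^*)$ and applying Jensen's inequality, and part (a) by boundedness of the iterates, extraction of a convergent subsequence whose limit is shown to satisfy \eqref{KKT} by passing to the limit in \eqref{eq:1}--\eqref{eq:2} and the dual update, followed by re-centering the Lyapunov function of Proposition~\ref{propA}(c) at that limit to upgrade subsequential to whole-sequence convergence. Your handling of the leftover Bregman boundary term --- absorbing $-B_f(y^*,y^K)$ into the potential and showing it stays nonnegative via $B_f(y^*,y^K)\le\tfrac{L_f}{2}\|y^K-y^*\|_2^2$ and $\rho>L_f$ --- is in fact slightly more careful than the paper's closing remark that $B_f(y^*,y^K)-B_f(y^*,y^0)<0$ for $K$ sufficiently large.
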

\begin{proof}
	(a):  By Proposition \ref{propA}, we know that $\{m_k\}$ is a convergent sequence. Thus, $\{(x^k,y^k,w^k)\}$ is a bounded sequence. Every bounded sequence in $\mathbb{R}^n$ contains a convergent subsequence. Note that $\{(x^{k_j},y^{k_j},w^{k_j})\}$ is the corresponding subsequence. Hence, it has a limit point, which we denote by $(x^\infty,y^\infty,w^\infty)$. 
	\begin{itemize}
		\item \textbf{Step 1:} Prove that the accumulation point $(x^\infty,y^\infty,w^\infty)$ is a KKT point. 
	\end{itemize}
	As the dual update $\frac{1}{\rho}(w^{k}-w^{k+1}) = Ax^{k+1}-y^{k+1}$ and $\|w^{k}-w^{k+1}\|\xrightarrow{} 0$, we have $\|Ax^{k+1}-y^{k+1}\|\xrightarrow{} 0$ and $Ax^{\infty}-y^{\infty}=0$. Thus, any accumulation point is dual feasible. Moreover, by the convexity of $F(\cdot,\cdot)$ and the optimality conditions \eqref{eq:1} and \eqref{eq:2}, 
	\begin{equation}
	\begin{aligned}
	F(x^*,y^*) & \ge F(x^{k_j},y^{k_j}) - \langle \rho A^T(Ax^{k_j+1}-y^{k_j}-\frac{w^{k_j}}{\rho}), x^*-x^{k_j} \rangle \\
	& + \langle \nabla f(y^{k_j+1})-\nabla f(y^{k_j})-w^{k_j+1}, y^*-y^{k_j} \rangle.  
	\end{aligned}
	\end{equation}
	As $j \xrightarrow{} \infty$, we have
	\begin{equation}
	F(x^*,y^*) \ge F(x^\infty,y^\infty)-\langle w^\infty, A(x^*-x^\infty)+y^\infty-y^* \rangle. 
	\end{equation}
	Since the point $(x^\infty,y^\infty)$ is dual feasible, $F(x^*,y^*) \ge F(x^\infty,y^\infty)$. As $(x^\infty,y^\infty)$ and $(x^*,y^*)$ are both feasible solutions, $(x^\infty,y^\infty)$ is an optimal solution. 
	
	Moreover, based on the convexity of $g(\cdot)$ and the $x$-update optimality condition \eqref{eq:1}, we have
	\begin{equation}
	g(x) \ge g(x^{k_j}) - \langle \rho A^T(Ax^{k_j+1}-y^{k_j}-\frac{w^{k_j}}{\rho}), x-x^{k_j} \rangle. 
	\end{equation}
	As $j \xrightarrow{} \infty$, we have
	\begin{equation}
	g(x) \ge g(x^{\infty}) + \langle A^Tw^{\infty}, x-x^{\infty} \rangle. 
	\end{equation}
	Thus, $A^Tw^\infty \in \partial g(x^\infty)$ and similarly $-w^\infty \in \nabla f(y^\infty)+\partial P(y^\infty)$. Therefore, $(x^\infty,y^\infty,w^\infty)$ is a  KKT point. 
	\begin{itemize}
		\item \textbf{Step 2:} Prove that the whole sequence $\{(x^k,y^k,w^k)\}_{k\ge0}$ converges to $(x^\infty,y^\infty,w^\infty)$. 
	\end{itemize}
	By choosing $(x^*,y^*,w^*) = (x^\infty,y^\infty,w^\infty)$ in Proposition \ref{propA}, we have $\frac{1}{2\rho}\|w^{k_j+1}-w^\infty\|_2^2+\frac{\rho}{2}\|y^{k_j+1}-y^\infty\|_2^2-B_f(y^\infty,y^{k_j+1}) \xrightarrow{} 0 $. Subsequently, by Proposition \ref{propA}(a), it is easy to conclude that $\frac{1}{2\rho}\|w^{k+1}-w^\infty\|_2^2+\frac{\rho}{2}\|y^{k+1}-y^*\|_2^2-B_f(y^\infty,y^{k+1}) \xrightarrow{} 0 $. Therefore, $(x^k,y^k,w^k) \xrightarrow{} (x^\infty,y^\infty,w^\infty)$. As the point $(x^\infty,y^\infty,w^\infty)$ can be any limit point of the sequence $\{(x^k,y^k,w^k)\}_{k\ge0}$, we conclude that $(x^\infty,y^\infty,w^\infty)$ is a KKT point. 
	
	(b): On top of Proposition \ref{propA}(b), we have
	\begin{equation*}
	\begin{aligned}
	F(x^{k+1},y^{k+1})-F(x^*,y^*) &\leq \frac{1}{2\rho}(\|w^{k}\|_2^2-\|w^{k+1}\|_2^2) + \frac{\rho}{2}(\|y^k-y^*\|_2^2-\|y^{k+1}-y^*\|_2^2)\\
	&+c(\|y^k-y^{k-1}\|_2^2-\|y^{k+1}-y^{k}\|_2^2) +(B_f(y^*,y^{k+1})-B_f(y^*,y^{k})).
	\end{aligned}
	\end{equation*}
	Since $F(\cdot,\cdot)$ is convex, by the Jensen inequality, we have
	\begin{equation*}
	\begin{aligned}
	&F(\bar{x}^K,\bar{y}^K)-F(x^*,y^*) \leq \frac{1}{K}\sum_{k=1}^K (F(x^k,y^k) - F(x^*,y^*))\\
	& \leq  \frac{1}{K}\left\{ \frac{1}{2\rho}(\|w^{0}\|_2^2-\|w^{K}\|_2^2)+\frac{\rho}{2}(\|y^0-y^*\|_2^2-\|y^{K}-y^*\|_2^2)+c(\|y^1-y^{0}\|_2^2-\|y^{K}-y^{K-1}\|_2^2)\right\}\\
	& \leq  \frac{1}{K}\left\{ \frac{1}{2\rho}\|w^{0}\|_2^2+\frac{\rho}{2}\|y^0-y^*\|_2^2+c\|y^1-y^{0}\|_2^2\right\}.
	\end{aligned}
	\end{equation*}
	Noting that $B_f(y^*,y^{K})-B_f(y^*,y^{0})<0$ when $K$ is sufficiently large, we complete the proof.
\end{proof}

\newpage
\subsection*{B: Implementation Details}

\begin{figure}[ht]
	\centering
	\includegraphics[height=7.5cm]{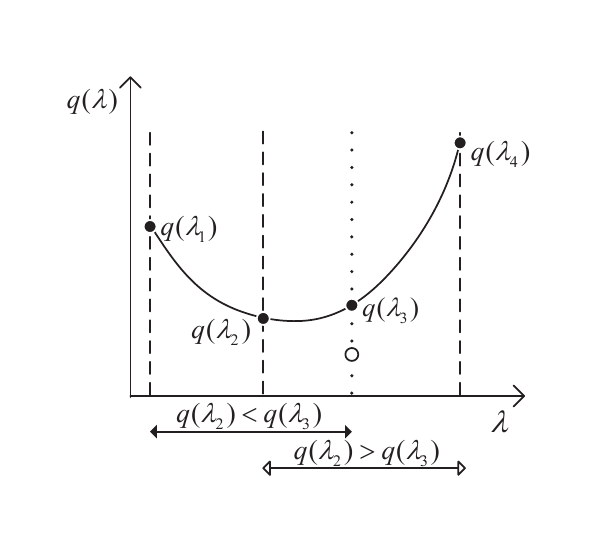}
	\caption{Schematics of the Golden Section Search Method}
\end{figure}

\begin{algorithm}
	\SetAlgoLined
	\caption{Golden Section Search Algorithm}
	\KwIn{$\lambda_1 = 0$, $\lambda_4 =\lambda^U$, $r = 0.618$;}
	\KwOut{Optimal solution $(\beta^*,\lambda^*)$;}
	\While{not converge}{
		$\lambda_2 = r\lambda_1+(1-r)\lambda_4$\;
		$\lambda_3 = (1-r)\lambda_1+r\lambda_4$\;
		$q(\lambda_1) = $ LP-ADMM$(\lambda_1)$\;
		%\Comment*[r]{$\beta$~subproblem } 
		$q(\lambda_2) = $ LP-ADMM$(\lambda_2)$\;
		$q(\lambda_3) = $ LP-ADMM$(\lambda_3)$\;
		$q(\lambda_4) = $ LP-ADMM$(\lambda_4)$\;
		\eIf{$q(\lambda_2)<q(\lambda_3)$}{
			set $\lambda_4 = \lambda_3$
		}{set $\lambda_1 = \lambda_2$}
	} 
\end{algorithm}

\paragraph{Primal Dual Hybrid Gradient Method} 
\begin{equation}
\min\limits_{\|x\|_\infty\leq\lambda}\max\limits_{\|y\|_\infty\leq1} \frac{1}{N}\sum\limits_{i=1}^N \left\{\log(1+\exp(-a_i^Tx))+\frac{1}{2}(a_i^Tx-b_i)\right\} +\frac{1}{2N}y^T(Ax-b)
\end{equation}
\paragraph{Standard ADMM}
\begin{equation}
\label{eq:sadmm}
\begin{aligned}
& \underset{x,\,y}{\text{min}}
&  & f(y) + g(z) + \mathbb{I}_{\{\|x\|_\infty \leq \lambda\}}\\
& \,\,\text{s.t.}
& & Ax = y,\\
& & &z = y - b. \\
\end{aligned} 
\end{equation}
The corresponding augmented Lagrangian function is
\[\mathcal{L}_\rho(x,y;u,v)=f(y) + g(z)+u^T(Ax-y) + v^T(z-y+b) +\frac{\rho}{2}\|Ax-y\|_2^2+\frac{\rho}{2}\|z-y+b\|_2^2.
\]
\begin{Remark}
	We can regard $(y,z)$ as a block in problem \eqref{eq:sadmm} and apply the standard ADMM to solve it. Moreover, though $g(\cdot)$ is a non-smooth function, it statisfies the semi-smooth property and its generalized Hessian matrix can be easily computed due to the log-loss function. Namely, 
	\begin{equation}
	{(y^{k+1},z^{k+1})} = \arg\min_{y}\left\{f(y)+\frac{1}{2}\|y-d_1\|^2 +\min_{z}\{g(z)+\frac{1}{2}\|z-(y-d_2)\|^2\} \right\},
	\end{equation}
	where $d_1 = Ax^{k+1}+\frac{\mu^k}{\rho}$ and $d_2 = b+\frac{v^k}{\rho}$.
\end{Remark}
\begin{algorithm}[H]
	\SetAlgoLined
	\caption{Standard ADMM}
	\KwIn{$\rho = 10$, $\sigma = 1$, $k = 1$, randomly generate $x^1$, $y^1$, $z^1$, $u^1$, $v^1$;}
	\KwOut{Optimal solution $(x^*,y^*,z^*)$;}
	\While{not converge}{
		$x^{k+1} =\underset{x}{\text{argmin}}\, \left\{\frac{\rho}{2}\|Ax-y^{k}+\frac{u^k}{\rho}\|_2^2+g(x)\right\}$ \Comment*[r]{ Conjugate Gradient Method}
		$y^{k+1} =\underset{y\in\mathbb{R}^N}{\text{argmin}}\,\left\{f(y)  + \frac{\rho}{2}\|y - Ax^{k+1} - \frac{u^k}{\rho}\|_2^2\right\}$\Comment*[r]{ Semi-Smooth Newton }
		$z^{k+1} = \underset{z\in\mathbb{R}^N}{\text{argmin}}\,\left\{ g(z)  + \frac{\rho}{2}\|z - y^{k+1}+b+  \frac{v^k}{\rho}\|_2^2\right\}$\;
		$u^{k+1} = u^k+\sigma\rho(Ax^{k+1} -y^{k+1})$\;
		$v^{k+1} = v^k+\sigma\rho(z^{k+1} -y^{k+1} +b)$\;
		$k = k + 1$\;
	}
\end{algorithm}

\begin{algorithm}[H]
	\SetAlgoLined
	\caption{Semi-smooth Newton Method}
	\KwIn{$k = 1$, random generate $y^1$;}
	\KwOut{Optimal solution $y^*$;}
	\While{not converge}{
		$\nabla f(y^k)= -\frac{e^{-y^k}}{(1+e^{-y^k})} + \rho(y-Ax - \frac{u}{\rho}) + \rho(y-z-b-\frac{v}{\rho})$\;
		\eIf{$y^k_i>(b+\frac{v}{\rho})_i\, or\, y^k_i<(b+\frac{v}{\rho})_i +\frac{1}{\rho}$}{
			$h_i = 1$;
		}{ $h_i = 0$;}
		$ \nabla^2 f(y^k) =\frac{e^{-y^k}}{(1+e^{-y^k})^2} +\rho+\rho h $\Comment*[r]{computing the generalized Hessian} 
		$g^k = (\nabla^2 f(y^k))^{-1}\cdot \nabla f(y^k) $\;
		$y^{k+1} = y^k + ss\cdot g^k$\Comment*[r]{ss is returned by line search}
		$k = k + 1$\;
	}
\end{algorithm}
Note that the subproblem solvers are the accelerated projected gradient method and semi-smooth Newton method, respectively. 
\newpage
\subsection*{Solver for Quadratic Minimization with Box constraints}
\begin{equation}
\begin{aligned}
& \underset{x}{\text{min}}
&  & \|Ax-b\|_2^2\\
& \,\,\text{s.t.}
& & \|x\|_\infty \leq \lambda.\\
\end{aligned} 
\label{eq:qp}
\end{equation}
\begin{algorithm}[htp!]
	\SetAlgoLined
	\caption{Accelerated Projected Gradient}
	\KwIn{$ss = 1/\lambda_{max}(A)$, $k = 1$, random generate $x^1 = x_{old}$;}
	\KwOut{Optimal solution $x^*$;}
	\While{not converge}{
		$\beta_k = \frac{k}{k+3}$\;
		$ y^k = x^k + \beta(x^k - x_{old})$\;
		$g^k = A^T(Ax^k - b)$\;
		$w^k = y^k - ss\cdot g^k$\;
		$w^k  = \proj_{\{\|w\|_\infty \leq \lambda\}}(w^k)$\;
		$x_o = x^k$\;
		$x^k = w^k$\;
		$k = k + 1$\;
	}
\end{algorithm}

\begin{algorithm}[H]
	\SetAlgoLined
	\caption{Conjugate Gradient with Active Set}
	\KwIn{Randomly initialize $x^1$, $g^1 = A^T(Ax^1 - b)$, $k=1$;\\Bound set $\bar{B}=\{i:|x_i|=\lambda\,  \text{and}\, -g_i \cdot x_i\geq 0\}$,  Free set $\bar{F}=\{i:i\notin\bar{B}\}$;\\
		\leftline{$r^k_i=
			\begin{cases}
			-g^k_i,& i \in \bar{F},\\
			0,& {\rm otherwise}\\
			\end{cases}
			$}
	}
	\KwOut{Optimal solution $x^*$;}
	\While{not converge}{
		Update $\bar{F}$\;
		$r^k_i=
		\begin{cases}
		-g^k_i,& i \in \bar{F}, \\
		0,& {\rm otherwise}
		\end{cases}
		$\;
		\eIf{$k=1\, {\rm or}\, \bar{F}\,{\rm changed}$}{
			$p^k = r^k$
		}{ $\beta_k = \frac{\|r^k\|^2}{\|r^{k-1}\|^2}$\;
			$p^k = r^k + \beta_k p^{k-1} $}
		$\alpha_k = \frac{\|r^k\|^2}{\|Ap^k\|^2}$\;
		$\Tilde{x}^k = x^{k-1}+\alpha_k p^k$\;
		$x^k =\proj_{\{\|x\|_\infty \leq \lambda\}}(\Tilde{x}^k) $\;	
		\eIf{$\Tilde{x}^k = x^k$}{
			$g^k = g^k + \alpha_k Ap^k$
		}{ $g^k = A^T(Ax^k - b)$}
		$k = k+1$\;
	}
\end{algorithm}

\begin{Remark}
	As problem \eqref{eq:qp} takes a similar form as the dual formulation of 
	SVM, (i.e., see the form (4) in \cite{hsieh2008dual}), we use the same coordinate minimization algorithm as our alternative inner solver. 
\end{Remark}

\subsection*{C: Additional Experiment Results}
%    
%\begin{table}[htp!]
%    \begin{tabular}{c|c|c|c}
%    	\hline
%    	\hline
%    	$N$ &  $d$& APG (s) & ACG(s) \\
%    	\hline
%    	10000 & 500 &$156.30\pm2.41$  & $156.30\pm2.41$   \\
%    	20000 & 100& $3.29\pm 0.05$ & $52.05\pm1.51$  \\
%    	50000 & 100 & $3.34\pm 0.03$  & $108.41\pm3.18$    \\
%    	50000 & 500  & $7.92\pm 0.17$  & $556.41\pm9.76$ \\
%    	\hline
%    	\hline
%    \end{tabular}
%        \caption{The running time $(s)$ of our proposed first-order algorithmic framework on synthetic data}
%\end{table}

\begin{table}[htb!]
	\centering  
	\begin{tabular}{c|c|c|c|c}
		\hline
		\hline
		
		Dataset & Samples & Features & CPU time (s) & Sparsity Level \\
		\hline
		mushrooms & 8123 & 112 &8.47&0.8125 \\
		phishing & 11055 & 68& 4.01 & 0.7543\\
		w1a   & 2477  & 300   & 13.15 & 0.9618 \\
		w2a   & 3470  & 300   & 15.13  & 0.9612 \\
		w3a   & 4912  & 300   & 15.96 & 0.9612 \\
		w4a   & 7366  & 300   & 19.35  & 0.9611 \\
		w5a   & 9888  & 300   & 19.50 & 0.9612\\
		w6a   & 17188 & 300   & 11.42 & 0.9611\\
		w7a   & 24692 & 300   & 12.07& 0.9611\\
		w8a   & 49749 & 300   & 14.86 & 0.9612\\
		MNIST(0 vs 3)   & 12054 & 752 &118.68 &0.7641\\
		MNIST(0 vs 4)   & 11765 & 754 & 117.81&0.7784\\
		MNIST(0 vs 6)   & 11841 & 720 & 118.79&0.7577\\
		MNIST(2 vs 3)   & 12089 & 752 & 127.43&0.7792\\
		MNIST(2 vs 5)   & 11379 & 771 & 134.80&0.7912\\
		MNIST(5 vs 8)   & 11272 & 771 & 111.54&0.7883\\
		MNIST(5 vs 9)   & 11370 & 780 & 123.89&0.8109\\
		MNIST(6 vs 9)   & 11867 & 780 & 139.14 &0.8077\\
		\hline
		\hline
	\end{tabular}%
	\caption{The CPU time $(s)$ of our proposed first-order algorithmic framework on real datasets}
\end{table}
Here, the sparsity level is the ratio of the zero elements to the total elements. 
The stopping criterion used in our proposed LP-ADMM is the dual infeasibility $\|Ax^{k+1}-y^{k+1}\|_2 \leq 10^{-6}$ in all experiments.	
\end{document}